\newcommand{\for}{\text{for }}
\pgfplotsset{compat=1.13}
\colorlet{pink}{red!75}
\colorlet{sky}{blue!60}
\title{Long monochromatic paths and cycles in 2-colored bipartite graphs} 
\author{Louis DeBiasio$^{1}$, Robert A. Krueger$^{2}$}
\date{\today}
\begin{document}

\maketitle
\noindent\footnotetext[1]{Department of Mathematics, Miami University {\tt debiasld@miamioh.edu}. Research supported in part by Simons Foundation Collaboration Grant \# 283194.}
\noindent\footnotetext[2]{Department of Mathematics, Miami University {\tt kruegera@miamioh.edu}.}

\begin{abstract}
Gy\'arf\'as and Lehel and independently Faudree and Schelp proved that in any 2-coloring of the edges of $K_{n,n}$ there exists a monochromatic path on at least $2\ceiling{n/2}$ vertices, and this is tight.  We prove a stability version of this result which holds even if the host graph is not complete; that is, if $G$ is a balanced bipartite graph on $2n$ vertices with minimum degree at least $(3/4+o(1))n$, then in every 2-coloring of the edges of $G$, either there exists a monochromatic cycle on at least $(1+o(1))n$ vertices, or the coloring of $G$ is close to an extremal coloring -- in which case $G$ has a monochromatic path on at least $2\ceiling{n/2}$ vertices and a monochromatic cycle on at least $2\floor{n/2}$ vertices.  Furthermore, we determine an asymptotically tight bound on the length of a longest monochromatic cycle in a 2-colored balanced bipartite graph on $2n$ vertices with minimum degree $\delta n$ for all $0\leq \delta\leq 1$.  
\end{abstract}

\section{Introduction}

The order of a path $P$ is the number of vertices in $P$ and the length of $P$ is the number of edges in $P$.  


The following is a classical result in graph-Ramsey theory. 

\begin{theorem}[Gerencs\'er, Gy\'arf\'as \cite{GG}]\label{thm:GG}
Let $n$ be a positive integer.  In every 2-coloring of the edges of $K_n$, there exists a monochromatic path on at least $\ceiling{(2n+1)/3}$ vertices. Furthermore, there exists a 2-coloring of the edges of $K_n$ such that the longest monochromatic path has $\ceiling{(2n+1)/3}$ vertices.
\end{theorem}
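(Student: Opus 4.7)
The plan is to prove the lower bound by extremal analysis of a longest monochromatic path, and to exhibit a matching construction for tightness. For the lower bound, I would take a longest monochromatic path $P$, which without loss of generality is red, with vertex sequence $v_1, v_2, \ldots, v_k$. The maximality of $P$ implies that every edge from an endpoint $v_1$ or $v_k$ to $V(K_n) \setminus V(P)$ is blue, since otherwise $P$ extends. Suppose for contradiction that $k < \ceiling{(2n+1)/3}$, so that $|V(K_n) \setminus V(P)| = n - k > (n-1)/3$. To strengthen the structural conclusion, I would apply a P\'osa-style rotation: whenever $v_1 v_j$ is red for some $3 \leq j \leq k$, the rotated sequence $v_{j-1} v_{j-2} \cdots v_1 v_j v_{j+1} \cdots v_k$ is also a longest red path, so $v_{j-1}$ must also send only blue edges to $V(K_n) \setminus V(P)$; analogously on the right. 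This yields two sets $L, R \subseteq V(P)$ each joined to $U := V(K_n) \setminus V(P)$ entirely in blue.

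The central task is then to assemble from $L$, $R$, $U$, and the red structure of $P$ a blue path whose order exceeds $k$, contradicting the maximality of $P$ among all monochromatic paths. The main obstacle, and the reason the argument is nontrivial, is the case analysis: if the rotated endpoint sets $L, R$ are both large, one can directly build a long blue path that alternates between $U$ and $L \cup R$ using segments of $P$; but if $|L|$ or $|R|$ is small, then the corresponding endpoint has few red chords in $P$ and hence many blue edges into $V(P)$, which is leveraged through a different blue-path construction stitched through $V(P)$ itself. Balancing these two regimes yields exactly the threshold $k \geq \ceiling{(2n+1)/3}$.

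For tightness, I would exhibit the classical extremal coloring. Partition $V(K_n)$ into sets $X$ and $Y$ with $|X| = \floor{(n-1)/3}$ and $|Y| = n - |X| = \ceiling{(2n+1)/3}$. Color every edge inside $X$ red, every edge inside $Y$ red, and every edge between $X$ and $Y$ blue. The red subgraph is a disjoint union of two complete graphs, whose longest path has $|Y| = \ceiling{(2n+1)/3}$ vertices; the blue subgraph is the complete bipartite graph $K_{|X|, |Y|}$, whose longest path has $2|X| + 1 \leq \ceiling{(2n+1)/3}$ vertices. Hence the longest monochromatic path in this coloring has exactly $\ceiling{(2n+1)/3}$ vertices, matching the lower bound.
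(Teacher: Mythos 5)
The paper does not actually prove this theorem; it is stated as a cited classical result of Gerencs\'er and Gy\'arf\'as, so there is no internal proof to compare against. That said, your proposal can still be assessed on its own merits.

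Your tightness construction is correct. With $|X|=\lfloor (n-1)/3\rfloor$ and $|Y|=\lceil (2n+1)/3\rceil$, both parts red internally and the crossing edges blue, the red graph is $K_{|X|}\sqcup K_{|Y|}$ with longest path $|Y|=\lceil(2n+1)/3\rceil$ vertices, and the blue graph $K_{|X|,|Y|}$ has longest path $2|X|+1\leq\lceil(2n+1)/3\rceil$ vertices; checking residues of $n$ modulo $3$ confirms the inequality in all cases. This is a legitimate (if slightly nonstandard in its symmetry) extremal coloring.

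The lower bound, however, is an outline rather than a proof, and the missing part is precisely the heart of the theorem. You correctly set up the P\'osa rotation framework: with $P=v_1\cdots v_k$ a longest monochromatic (say red) path, $U=V\setminus V(P)$, and $L,R$ the rotation-endpoint sets, all edges from $L\cup R$ to $U$ are blue. But the two crucial steps are not carried out. In the ``large $L,R$'' regime you assert one can ``build a long blue path that alternates between $U$ and $L\cup R$ using segments of $P$,'' without specifying the path or verifying its length exceeds $k$; note that when $n\equiv 0\pmod 3$ and $k=2n/3$, a bare alternation in $[L,U]$ gives only $2|U|=k$ vertices, so equality must be broken by some additional argument, which you don't supply. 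In the ``small $L$ or $R$'' regime you say only that the endpoint ``has many blue edges into $V(P)$, which is leveraged through a different blue-path construction stitched through $V(P)$,'' and then assert that ``balancing these two regimes yields exactly the threshold $k\geq\lceil(2n+1)/3\rceil$.'' No construction is given, no lengths are computed, and it is not shown why the threshold comes out to exactly $\lceil(2n+1)/3\rceil$ rather than, say, $\lceil n/2\rceil$ (which is all a naive alternating-path count gives when $|L|$ is small). The classical Gerencs\'er--Gy\'arf\'as proof proceeds by a short induction on path Ramsey numbers $R(P_k,P_m)=k+\lfloor m/2\rfloor-1$, not by P\'osa rotation; a rotation-based proof may well be possible, but as written yours leaves the decisive case analysis entirely unverified, so the lower bound is not established.
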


Theorem \ref{thm:GG} has been extended in a number of ways.  Gy\'arf\'as, S\'ark\"ozy, and Szemer\'edi \cite{GSS} proved a stability version; that is, for all $\alpha>0$, there exists $n_0$ and $\eta>0$ such that for all $n\geq n_0$, in any 2-coloring of the edges of $K_n$, either there is a monochromatic path on at least $(2/3+\eta)n$ vertices or the 2-coloring is close to an extremal coloring with parameter $\alpha$.  Gy\'arf\'as and S\'ark\"ozy \cite{GS} then proved a version of Theorem \ref{thm:GG} for non-complete graphs; that is, they proved that if $G$ is a graph on $n$ vertices with $\delta(G)\geq (3/4+o(1))n$, then in every 2-coloring of the edges of $G$, there exists a monochromatic path on at least $(2/3-o(1))n$ vertices and they note that there exists graphs with minimum degree $3n/4-1$ and 2-colorings of such graphs where the longest monochromatic path has at most $n/2$ vertices.  Later this result was strengthened by Benevides, \L uczak, Skokan, Scott, and White \cite{BLSSW} who proved that if $\delta(G)\geq 3n/4$, then either $G$ contains a monochromatic path on at least $(2/3+o(1))n$ vertices or $G$ is close to an extremal example in which case $G$ contains a path on at least $(2/3-o(1))n$ vertices (in fact their result says something more about cycles of specific lengths).  Finally, White \cite{W} determined an asymptotically tight bound on the length of a longest path in an arbitrary 2-coloring of a graph $G$ with minimum degree at least $\delta n$ for all $0\leq \delta \leq 3/4$. 

\subsection{Bipartite graphs}

Gy\'arf\'as and Lehel \cite{GL} and independently Faudree and Schelp \cite{FS} proved a bipartite analog of Theorem \ref{thm:GG}.  

\begin{theorem}\label{thm:GL}
Let $n$ be a positive integer.  In every 2-coloring of the edges of $K_{n,n}$ there exists a monochromatic path on at least $2\ceiling{n/2}$ vertices.  Furthermore, this is best possible by Example \ref{exLargeDeg}.
\end{theorem}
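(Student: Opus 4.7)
The plan is to proceed by contradiction. Suppose some 2-coloring of $K_{n,n}$ has no monochromatic path on $2\ceiling{n/2}$ vertices, and let $P = v_1 v_2 \cdots v_k$ be a longest monochromatic path, which we may assume is red, so that $k < 2\ceiling{n/2}$. Write $A,B$ for the parts of $K_{n,n}$ and $W := V(K_{n,n}) \setminus V(P)$, and note $|W| = 2n - k \geq 2\floor{n/2}+1$. By maximality of $P$, every edge from $v_1$ or $v_k$ to a vertex of $W$ on the opposite side of the bipartition must be blue, since such an edge would extend $P$. This gives a first block of blue edges between the endpoints of $P$ and $W$ that I would then amplify.

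Next I would run a P\'osa-style rotation argument: for any index $i$ with $v_1 v_{i+1}$ red, the path $v_i v_{i-1} \cdots v_1 v_{i+1} v_{i+2} \cdots v_k$ is also a longest red path, and its new endpoint $v_i$ lies in the same part as $v_1$, so by maximality every edge from $v_i$ to the opposite part of $W$ is blue. Collecting all such rotation endpoints at $v_1$ and $v_k$ into sets $U_1, U_k$, every vertex of $U_1 \cup U_k \cup \{v_1, v_k\}$ sends only blue edges to the appropriate part of $W$. Moreover, a red edge inside $W$ combined with blue edges from its endpoints back to $v_1$ or $v_k$ would again extend $P$, so the red structure of $W$ is constrained; this forces a substantial complete-bipartite blue graph between $U_1 \cup U_k \cup \{v_1, v_k\}$ and $W$. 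I would then weave these blue edges together, alternately stepping through $W$ and through the rotation endpoints, to assemble a blue path of order at least $2\ceiling{n/2}$, a contradiction.

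\textbf{Main obstacle.} The hard part is the parity bookkeeping that the bipartite structure imposes. Each rotation toggles which side of the bipartition the endpoint sits on, so one has to treat separately the cases $k$ even (endpoints on opposite sides) and $k$ odd (endpoints on the same side), and also $n$ even versus $n$ odd. Since the bound $2\ceiling{n/2}$ is exactly tight by Example~\ref{exLargeDeg}, the final weaving step cannot afford to lose a single vertex, so the interaction between the available rotation endpoints, the two sides of $W$, and the target order $n$ or $n+1$ must be tracked carefully to recover the last vertex in each case.
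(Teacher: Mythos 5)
The paper does not prove Theorem~\ref{thm:GL} at all: it is stated as a known result and attributed to Gy\'arf\'as--Lehel \cite{GL} and Faudree--Schelp \cite{FS}. The closest the paper comes to re-deriving it is via Proposition~\ref{extremalCase}, which (for $n$ sufficiently large) extracts a monochromatic path of order $2\lceil n/2\rceil$ from an $\eta$-extremal coloring using K\"onig's theorem, Hamiltonian bi-connectivity (Theorem~\ref{berge_lemma}) and the regularity machinery --- a completely different route from your P\'osa-rotation sketch. So there is no ``paper proof'' to compare against; I can only assess your outline on its own terms.

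As written your outline has a genuine gap at the weaving step, and one of your intermediate claims is incorrect. First, the claim that ``a red edge inside $W$ combined with blue edges from its endpoints back to $v_1$ or $v_k$ would again extend $P$'' is false as stated: $P$ is a red path, and blue edges cannot extend it, so a red edge inside $W$ imposes no constraint through maximality of $P$. This means you do not get to rule out arbitrary red structure inside $W$, and in particular you cannot conclude that $[W\cap X, W\cap Y]$ is largely blue. Second, and more seriously, the final step asserts that the rotation endpoint sets $U_1\cup\{v_1\}$ and $U_k\cup\{v_k\}$, together with $W$, carry enough complete-bipartite blue structure to yield a blue path of order $2\lceil n/2\rceil$. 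But the longest path in a complete bipartite graph $K_{a,b}$ has at most $2\min\{a,b\}+1$ vertices, so you need $|U_1\cup\{v_1\}|$ (and/or $|U_k\cup\{v_k\}|$) to be on the order of $n/2$, and nothing in the sketch produces such a lower bound. P\'osa's lemma controls the \emph{red} neighborhood of $U_1$ from above (roughly $|N_R(U_1)|\le 2|U_1|+1$), which translates a large red degree of $v_1$ into a large $|U_1|$; but since $v_1$ sends only blue edges to $W$, $v_1$ could plausibly have small red degree, in which case $U_1$ could be tiny and the complete bipartite blue graph between $U_1\cup\{v_1\}$ and $W\cap Y$ contributes only a short path. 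Example~\ref{exLargeDeg} shows the bound is exactly tight, so any lossy step like this is fatal. Until you supply a mechanism that either forces $|U_1\cup U_k|$ to be large or shows how to pick up the rest of the blue path from elsewhere (e.g.\ from the blue subgraph of $[V(P),W]$ beyond just the rotation endpoints, or from the blue structure induced on $V(P)$ itself), the argument does not close.
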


The purpose of this paper is to extend Theorem \ref{thm:GL} in a few ways, analogous to the above extensions of Theorem \ref{thm:GG}.

Our first result gives an asymptotically tight bound on the length of a monochromatic cycle in a balanced bipartite graph on $2n$ vertices with minimum degree $\delta n$ for all $0\leq \delta \leq 1$ (see Figure \ref{degreeplot}).  

\begin{theorem}\label{thm2}
For all $0\leq \delta \leq 1$ and $\ep>0$, there exists $n_0$ such that if $G$ is a balanced bipartite graph on $2n\geq 2n_0$ vertices with $\delta(G)\geq \delta n$, then in every 2-coloring of the edges of $G$ there exists a monochromatic cycle of order at least $(f(\delta) - \ep) n$, where
\[ f(\delta) = 
\begin{cases}
      \delta,& \for 0\leq \delta \leq 2/3, \\
      4\delta-2,& \for 2/3\leq \delta\leq 3/4, \\
      1,& \for 3/4\leq \delta\leq 1.
    \end{cases}
\]
\end{theorem}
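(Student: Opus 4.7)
\medskip
\noindent\textbf{Proof plan.} I plan to split the argument into three intervals matching the three linear pieces of $f$. When $3/4 \leq \delta \leq 1$, the desired bound $f(\delta) = 1$ follows directly from the main stability theorem announced in the abstract: either $G$ contains a monochromatic cycle on $(1-o(1))n$ vertices, or the 2-coloring of $G$ is close to an extremal one, and in that case Theorem \ref{thm:GL} applied to the essentially complete bipartite structure produces a monochromatic cycle on $2\lfloor n/2\rfloor \geq (1-\ep)n$ vertices. The remaining work is to handle the two ranges $\delta \leq 2/3$ and $2/3 \leq \delta \leq 3/4$.

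\medskip
\noindent\textbf{Regularity reduction.} For $\delta < 3/4$ I would apply Szemer\'edi's regularity lemma to the 2-coloring of $G$ to produce a reduced bipartite graph $R$ with parts of size $k$, whose edges correspond to monochromatic $\ep_1$-regular pairs. After the usual cleaning step, $R$ has minimum degree at least $(\delta - \ep/8)k$. A standard embedding argument shows that a monochromatic connected matching of size $m$ in $R$ lifts to a monochromatic cycle in $G$ of length at least $2m\cdot(n/k)(1-o(1))$. So it suffices to prove the following structural claim: every 2-coloring of a bipartite graph $R$ on parts of size $k$ with minimum degree at least $(\delta - o(1))k$ contains a monochromatic connected matching of size at least $(f(\delta)/2 - o(1))k$.

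\medskip
\noindent\textbf{The structural claim.} For $\delta \leq 2/3$ the target size is $\delta k/2$. Every connected component of $R$ has at least $\delta k$ vertices on each side, because a vertex in $R$ has all its $\geq \delta k$ neighbors in its own component. In the largest such component, the majority color class has average degree at least $\delta k/2$, and a short Hall/K\"onig argument inside this monochromatic component produces a matching of size $\geq \delta k/2$. For $2/3 \leq \delta \leq 3/4$ the target becomes $(2\delta-1)k$, which exceeds what a single component of size $\delta k$ could contribute, so a more delicate argument is needed. I would proceed by contradiction: if neither color admits a connected matching of size $(2\delta-1-\gamma)k$, then each color's components are small and balanced in a tightly controlled way, and counting crossing edges (between distinct same-color components, which must be in the opposite color) forces some vertex of $R$ to have degree strictly less than $\delta k$, a contradiction.

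\medskip
\noindent\textbf{Main obstacle.} The middle regime $2/3 \leq \delta \leq 3/4$ is the main technical hurdle: the extremal coloring of $R$ interpolates between the blow-up of $C_6$ (with alternating colors around the reduced six-cycle) that witnesses $f(2/3)=2/3$ and the two-block extremal coloring that witnesses $f(3/4)=1$, and a stability-type analysis of the monochromatic component structure, combined with careful bookkeeping of how the min-degree condition forces edges between same-color components, will be required to cover this interpolation uniformly. A secondary routine issue is the lifting step, where one must orient the connected matching so that the short linking paths between successive matching edges stay inside the monochromatic component and do not interfere with the main embedding.
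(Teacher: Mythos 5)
Your overall architecture---apply the regularity lemma, find a large monochromatic connected matching in the reduced graph, and lift it to a long cycle---is exactly the paper's approach. The paper's proof of Theorem \ref{thm2} is a short, uniform application of Lemma \ref{2colordegreeform}, Theorem \ref{con_match} (the connected-matching theorem), and Lemma \ref{match_cycle}, with no case split on $\delta$. Your proposed detour through Theorem \ref{thm1} for $\delta \geq 3/4$ would work, but saves nothing: Theorem \ref{thm1} is proved with the same regularity-plus-connected-matching machinery (plus a stability analysis layered on top), so you would be importing heavier tools to avoid a case that the uniform argument already handles.

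The genuine gap is in the structural claim itself, i.e. Theorem \ref{con_match}, which is the real content here and which you sketch only loosely. For $\delta \leq 2/3$ your argument does not go through as written: the majority color in an $R$-component may have high average degree yet splinter into many small monochromatic pieces, so ``a short Hall/K\"onig argument'' does not produce a large connected matching. The paper instead argues by maximality among the four sets $L_X^B, L_X^R, L_Y^B, L_Y^R$ of vertices with color-degree $\geq \delta n/2$: take a maximum such set, say $L_X^B$, and a minimum vertex cover $C_i$ of each blue component $H_i$ meeting it; if every $|C_i| < \delta n/2$ (so K\"onig does not finish), one shows $L_X^B \cap H_i \subseteq C_i \cap X$ and that $Y_i \setminus C_i \subseteq L_Y^R$ is strictly larger, contradicting the choice of $L_X^B$. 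For $2/3 < \delta \leq 3/4$ your contradiction plan (``each color's components are small and balanced, counting crossing edges forces a low-degree vertex'') misdiagnoses the obstruction: Proposition \ref{lem:2col_comp} guarantees a monochromatic component $H_1$ that is \emph{large} on both sides; the difficulty is that its minimum vertex cover $S$ could be small. The paper shows that $H_1$ minus $S$ is then covered by a single red component $H_2$, takes a vertex cover $T$ of $H_2$, finds a vertex $u$ outside $S\cup T$, and plays the lower bound on $|S_Y|+|T_Y|$ forced by $N(u)\subseteq S_Y\cup T_Y$ against the upper bound on $|S_X|+|T_X|$ and the degree of a vertex in $Y\setminus(S_Y\cup T_Y)$ to reach a contradiction with $\delta(G)\geq\delta n$. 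Without arguments of this kind the hard part of the theorem remains unproved; the lifting step you flag as a ``secondary routine issue'' is indeed standard (Lemma \ref{match_cycle}) and not where the difficulty lies.
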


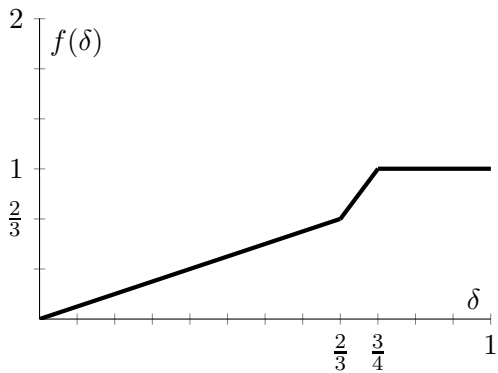
\begin{figure}[ht]
\centering
\begin{tikzpicture}
\begin{axis}[axis x line=middle, axis y line=middle, x axis line style=-, y axis line style=-, xtick={0,.0833,...,1.0829}, ytick={0,.333,...,1.998},
xmin=0, xmax=1, ymin=0, ymax=2, xticklabels={,,,,,,,,$\frac{2}{3}$,$\frac{3}{4}$,,,$1$}, yticklabels={,,$\frac{2}{3}$,$1$,,,$2$}, x=6cm, y=2cm, xlabel=$\delta$, ylabel=$f(\delta)$, no markers, every axis plot/.append style={ultra thick}]
\addplot[domain=0:.666]{x};
\addplot[domain=.666:.75]{4*x-2};
\addplot[domain=.75:1]{1};
\end{axis}
\end{tikzpicture}
\caption{Lower bound on the length of a longest monochromatic cycle as a function of degree}\label{degreeplot}
\end{figure}

Our next result simultaneously gives a stability version and a large minimum degree version of Theorem \ref{thm:GL}.  Before stating the result, we define an what we mean by an extremal coloring.

\begin{definition}[$\eta$-extremal coloring]
Let $\eta \geq 0$ and let $G$ be a balanced bipartite graph on $2n$ vertices.  Let $c:E(G)\to \{1,2\}$ be a 2-coloring of the edges of $G$.  We say that $c$ is an $\eta$-extremal coloring if there exists $X'\subseteq X$ and a partition $\{Y_1, Y_2\}$ of $Y$ such that $|X'|, |Y_1|, |Y_2|\geq (1/2-\eta)n$, $e_R(X', Y_1)\leq \eta n^2$, and $e_B(X',Y_2)\leq \eta n^2$.  
\end{definition}

\begin{theorem}\label{thm1}
For all real numbers $\gamma, \eta$ with $0 \leq 64\sqrt{\eta} < \gamma \leq \frac{1}{4}$, there exists $n_0$ such that if $G$ is a balanced bipartite graph on $2n\geq 2n_0$ vertices with $\delta(G)\geq (3/4+\gamma)n$, then in every 2-coloring of $G$, either there exists a monochromatic cycle on at least $(1+\eta)n$ vertices or the edge coloring is $\eta$-extremal, in which case there exists a monochromatic path on at least $2\ceiling{n/2}$ vertices and a monochromatic cycle on at least $2\floor{n/2}$ vertices.
\end{theorem}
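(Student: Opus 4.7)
The proof follows the regularity-plus-stability template. Apply the bipartite version of Szemer\'edi's regularity lemma to $G$ respecting the 2-coloring, producing $\varepsilon$-regular clusters $X_1,\dots,X_k \subseteq X$ and $Y_1,\dots,Y_k \subseteq Y$, and form the 2-colored reduced graph $H$ on these clusters by declaring a pair red (respectively blue) if the corresponding regular pair has density above a fixed threshold $d$ in that color. A standard computation shows that, after discarding a $o(1)$-fraction of edges, $H$ is a balanced bipartite graph on $k+k$ vertices with $\delta(H) \geq (3/4+\gamma/2)k$. The goal is to show that either $H$ contains a monochromatic connected matching saturating at least $(1+\eta)k$ of its vertices -- which lifts via the standard blow-up-and-connect argument to a monochromatic cycle on at least $(1+\eta)n$ vertices of $G$ -- or $H$ contains a structural witness that lifts to an $\eta$-extremal partition of $V(G)$.

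\textbf{Key structural dichotomy.} The main combinatorial ingredient is the following: for any 2-coloring of a balanced bipartite graph $H$ on $k+k$ vertices with $\delta(H) \geq (3/4+\gamma/2)k$, either some color contains a connected matching saturating at least $(1+\eta)k$ vertices, or there exist $\mathcal{X}'$ among the $X$-clusters and a partition $\{\mathcal{Y}_1,\mathcal{Y}_2\}$ of the $Y$-clusters, all of size at least $(1/2-\eta/2)k$, such that almost all $H$-edges from $\mathcal{X}'$ to $\mathcal{Y}_1$ are blue and almost all $H$-edges from $\mathcal{X}'$ to $\mathcal{Y}_2$ are red. To establish the dichotomy I would first locate, via a bipartite Gy\'arf\'as-type connectivity argument, a monochromatic component $C$ of $H$ (say blue) that covers a majority of each side. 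Inside $C$ the minimum-degree hypothesis together with K\"onig's theorem yields a large matching; if this matching already saturates $(1+\eta)k$ vertices we are done, and otherwise the vertices of $H$ missed by the matching (together with the vertices outside $C$) must, by the degree condition, have their incident edges concentrated in red in a structured bipartite pattern, producing the claimed partition.

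\textbf{Handling the two cases and the main obstacle.} In the non-extremal case, plug the monochromatic connected matching into the standard blow-up-and-connect machinery: connect the matching edges within the monochromatic component using short paths, then use $\varepsilon$-regularity to expand each cluster-pair into a long alternating path, producing a monochromatic cycle on at least $(1+\eta)n$ vertices of $G$. In the extremal case lift the partition of $V(H)$ back to a partition of $V(G)$ and check that the few-edges conditions of the $\eta$-extremal definition are satisfied (using the density threshold chosen in the regularization). For the ``in which case'' conclusion, given any $\eta$-extremal coloring of $G$ with $\delta(G)\geq (3/4+\gamma)n$, use the near-complete red bipartite graph between $X'$ and $Y_2$ together with the high minimum degree on $X\setminus X'$ to greedily build a monochromatic path on exactly $2\lceil n/2 \rceil$ vertices and a cycle on exactly $2\lfloor n/2 \rfloor$ vertices (balancing the two sides carefully, and patching in $o(n)$ vertices from $X\setminus X'$ when $|X'|$ falls short of $n/2$). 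The principal obstacle is the structural dichotomy of the middle step: ruling out ``near-miss'' configurations in which the largest monochromatic connected matching just barely misses $(1+\eta)k$ while the coloring is still far from any $\eta$-extremal structure, and ensuring that the extracted witness respects the asymmetric form (splitting only $Y$) demanded by the definition.
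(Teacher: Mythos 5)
Your outline matches the paper's proof almost step for step: regularity reduction to a reduced graph with $\delta(\Gamma)\geq(3/4+\gamma/2)k$, a stability dichotomy asserting either a monochromatic connected matching saturating roughly $(1+\eta)k$ reduced vertices or an extremal bipartition of the reduced graph, lifting the matching to a cycle via the \L uczak machinery (Lemma~\ref{match_cycle}), and lifting the extremal partition back to $G$ (Lemma~\ref{reducedextremal}) before handling the extremal coloring directly. Your sketch of the dichotomy (large monochromatic component via a Gy\'arf\'as-type argument, K\"onig's theorem for the matching, and structure from the degree condition when the matching is small) is the same strategy as Theorem~\ref{stable_match}, though the paper's version carefully plays two monochromatic components and their minimum vertex covers against each other to extract the asymmetric $(X', Y_1, Y_2)$ witness, which you correctly flag as a potential obstacle.

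The genuine gap is in the ``in which case'' conclusion. You describe this as ``greedily build a monochromatic path on exactly $2\lceil n/2\rceil$ vertices and a cycle on exactly $2\lfloor n/2\rfloor$ vertices,'' but a greedy construction does not obviously achieve the exact floor/ceiling bounds, and the phrasing hides the one configuration that makes the cycle bound $2\lfloor n/2\rfloor$ rather than $2\lceil n/2\rceil$. In the paper (Proposition~\ref{extremalCase}), the extremal case first cleans the bipartition by migrating atypical vertices, then uses Berge's Chv\'atal-type criterion (Theorem~\ref{berge_lemma}) via Lemma~\ref{longPathProp} to obtain Hamiltonian bi-connected monochromatic pairs, and then must confront the possibility that there is essentially a single vertex ($w_R$, found by Menger's theorem) separating the two red ``quadrants'' -- this is exactly the structure of Example~\ref{exCycle} and is why, for odd $n$, one can only guarantee a nearly spanning path plus a cycle of length $2\lfloor n/2\rfloor$. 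Your proposal does not identify this cut-vertex obstruction, and without it one cannot see why the path and cycle bounds must differ; the ``patch in $o(n)$ vertices'' step would need a much more careful case analysis (parity of $n$, which side the cut vertex lies on, whether it has enough red or blue edges back into the large part) before it becomes a proof.
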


While it is likely that the following result could be obtained from the arguments in \cite{GL} and \cite{FS} (for all $n$), it doesn't seem to appear in the literature, so we state it here explicitly.

\begin{corollary}\label{cor1}
Let $n$ sufficiently large positive integer. In every 2-coloring of the edges of $K_{n,n}$ there exists a monochromatic cycle on at least $2\floor{n/2}$ vertices.  Furthermore, this is best possible by Examples \ref{exLargeDeg} and \ref{exCycle}.
\end{corollary}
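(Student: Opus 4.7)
The plan is to derive Corollary \ref{cor1} as an essentially immediate consequence of Theorem \ref{thm1} applied to $G = K_{n,n}$, since the complete bipartite graph comfortably satisfies the minimum degree hypothesis.

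First I would fix parameters: choose $\gamma = 1/4$, and pick any $\eta > 0$ small enough that $64\sqrt{\eta} < \gamma = 1/4$ (for instance, $\eta < 2^{-20}$ suffices). Then let $n_0$ be the threshold given by Theorem \ref{thm1} for these parameters, and assume $n \geq n_0$. Since $K_{n,n}$ is balanced on $2n$ vertices with $\delta(K_{n,n}) = n = (3/4+\gamma)n$, the hypotheses of Theorem \ref{thm1} are met.

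Applying Theorem \ref{thm1} to an arbitrary 2-coloring of $K_{n,n}$, there are two cases. In the first case, we obtain a monochromatic cycle on at least $(1+\eta)n$ vertices. Since $(1+\eta)n > n \geq 2\lfloor n/2\rfloor$, this already exceeds the desired bound. In the second case, the coloring is $\eta$-extremal and Theorem \ref{thm1} directly guarantees a monochromatic cycle on at least $2\lfloor n/2\rfloor$ vertices. Either way, the lower bound is established.

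For the tightness statement, I would simply invoke Examples \ref{exLargeDeg} and \ref{exCycle}, which (as referenced in the statement) produce 2-colorings of $K_{n,n}$ realizing the bound $2\lfloor n/2\rfloor$ exactly; when $n$ is odd, Example \ref{exCycle} presumably exhibits a coloring whose longest monochromatic cycle has length $n-1 = 2\lfloor n/2\rfloor$ even though a path of length $n+1 = 2\lceil n/2\rceil$ can be forced. There is essentially no obstacle: the content of the corollary is entirely absorbed into Theorem \ref{thm1}, and the only thing to verify is the arithmetic $\delta(K_{n,n}) = n$ together with $(1+\eta)n \geq 2\lfloor n/2\rfloor$, both of which are immediate.
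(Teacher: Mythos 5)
Your proposal is correct and takes the same route the paper intends: Corollary \ref{cor1} is stated immediately after Theorem \ref{thm1} precisely because it follows by instantiating that theorem with $\gamma = 1/4$ (so that $\delta(K_{n,n}) = n = (3/4+\gamma)n$) and any sufficiently small $\eta$ with $64\sqrt{\eta} < 1/4$, and observing that both alternatives of the theorem's conclusion yield a monochromatic cycle on at least $2\lfloor n/2\rfloor$ vertices. The only thing to watch is that the threshold $n_0$ you inherit from Theorem \ref{thm1} depends on the fixed choice of $\eta$, which is exactly why the corollary is stated only for sufficiently large $n$.
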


In Section \ref{sec:examples}, we give examples to show that the degree conditions in Theorems \ref{thm2} and \ref{thm1} are asymptotically best possible.  It would be interesting to obtain exact versions of these results.

The proof of Theorems \ref{thm2} and \ref{thm1} go roughly as follows: we first find large monochromatic connected components (monochromatic component, for short) in Section \ref{sec:comps}; using that we find large monochromatic connected matchings in Section \ref{sec:matching} and prove a stability result regarding monochromatic connected matchings which will be needed for the proof of Theorem \ref{thm1}; the regularity lemma is introduced in Section \ref{sec:reg} along with the machinery needed to transfer results about monochromatic connected matchings to monochromatic cycles; we then combine all of these tools to prove Theorem \ref{thm2} in Section \ref{sec:circ} and Theorem \ref{thm1} in Sections \ref{sec:stability} and \ref{sec:extremal}.

\subsection{Notation}

We say that $G$ is an \emph{$X,Y$-bipartite graph} if $X$ and $Y$ are independent sets in $G$ and $\{X,Y\}$ forms a partition of $V(G)$.  Given an $X,Y$-bipartite graph $G$ and $X'\subseteq X$ and $Y'\subseteq Y$, we write $[X',Y']$ to the bipartite graph induced by $X'\cup Y'$.  Throughout the paper, when we refer to a 2-coloring of the edges of $G$, we mean $E(G)=E_R\cup E_B$ where $E_R$ are the set of red edges and $E_B$ are the set of blue edges.  Note that we do not require $E_R\cap E_B=\emptyset$, i.e. a 2-coloring of the edges always means a 2-multi-coloring of the edges.  This slightly more general setting is convenient because when we apply the regularity lemma, we will have edges which are $\ep$-regular in both red and blue and we would like to retain this information for the purposes of proving a stability result.  Of course any result which applies to every 2-multi-coloring of the edges applies to every 2-coloring of the edges, so we lose nothing by making this choice.  We write $N_R(v)=\{u: uv\in E_R(G)\}$ and $N_B(v)=\{u: uv\in E_B(G)\}$ and refer the vertices in $N_R(v)$ as the \emph{red neighbors} of $v$ and the vertices in $N_B(v)$ as the \emph{blue neighbors} of $v$.  Similarly, for $X' \subseteq X$ and $Y' \subseteq Y$, we write $[X',Y']_R$ ($[X',Y']_B$) for the spanning subgraph of red (blue) edges of $[X',Y']$. We also write $\delta_R(X',Y') = \min_{x \in X'} d_R(x, Y')$ for the \emph{minimum red degree from $X'$ to $Y'$} (similarly for blue). Note that in general, $\delta_R(X', Y') \neq \delta_R(Y', X')$.

We also use the standard notation $[k] = \{1,\dots,k\}$.

\section{Examples}\label{sec:examples}

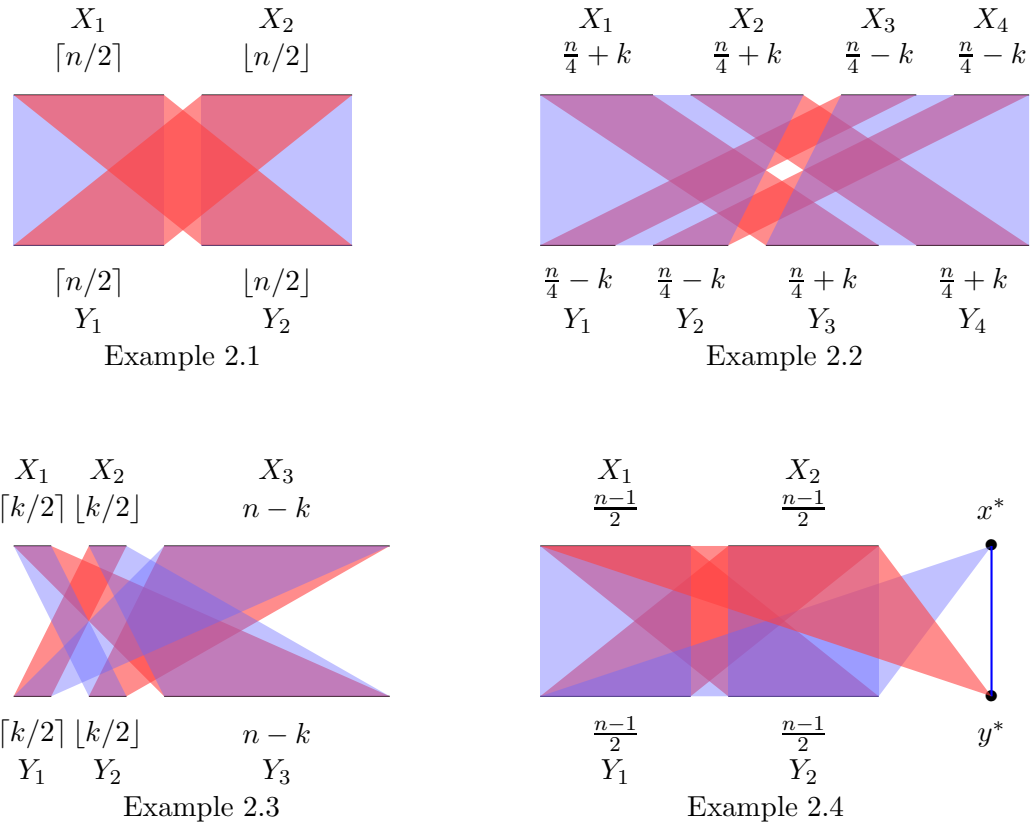
\begin{figure}[ht]
\centering
\begin{tikzpicture}
\begin{scope}[shift={(0,0)}]
  \draw (0,0) -- (2,0);
  \draw (2.5,0) -- (4.5,0);

  \draw (0,2) -- (2,2);
  \draw (2.5,2) -- (4.5,2);

  \draw[fill=sky, draw opacity=0, fill opacity=.4] (0,0) -- (2,0) -- (2,2) -- (0,2) -- (0,0);
  \draw[fill=sky, draw opacity=0, fill opacity=.4] (2.5,0) -- (4.5,0) -- (4.5,2) -- (2.5,2) -- (2.5,0);
  \draw[fill=pink, draw opacity=0, fill opacity=.6] (0,0) -- (2,0) -- (4.5,2) -- (2.5,2) -- (0,0);
  \draw[fill=pink, draw opacity=0, fill opacity=.6] (2.5,0) -- (4.5,0) -- (2,2) -- (0,2) -- (2.5,0);

  \draw (1,-.5) node{$\lceil n/2 \rceil$};
  \draw (3.5,-.5) node{$\lfloor n/2 \rfloor$};
  \draw (1,2.5) node{$\lceil n/2 \rceil$};
  \draw (3.5,2.5) node{$\lfloor n/2 \rfloor$};
  \draw (1,-1) node{$Y_1$};
  \draw (3.5,-1) node{$Y_2$};
  \draw (1,3) node{$X_1$};
  \draw (3.5,3) node{$X_2$};

  \draw (2.25,-1.5) node{Example \ref{exLargeDeg}};
\end{scope}

\begin{scope}[shift={(7,0)}]
  \coordinate (X1A) at (0,2);
  \coordinate (X1B) at (1.5,2);
  \coordinate (X2A) at (2,2);
  \coordinate (X2B) at (3.5,2);
  \coordinate (X3A) at (4,2);
  \coordinate (X3B) at (5,2);
  \coordinate (X4A) at (5.5,2);
  \coordinate (X4B) at (6.5,2);

  \coordinate (Y1A) at (0,0);
  \coordinate (Y1B) at (1,0);
  \coordinate (Y2A) at (1.5,0);
  \coordinate (Y2B) at (2.5,0);
  \coordinate (Y3A) at (3,0);
  \coordinate (Y3B) at (4.5,0);
  \coordinate (Y4A) at (5,0);
  \coordinate (Y4B) at (6.5,0);

  \coordinate (d) at (0,.5);

  \draw (X1A) -- (X1B);
  \draw (X2A) -- (X2B);
  \draw (X3A) -- (X3B);
  \draw (X4A) -- (X4B);
  \draw (Y1A) -- (Y1B);
  \draw (Y2A) -- (Y2B);
  \draw (Y3A) -- (Y3B);
  \draw (Y4A) -- (Y4B);

  \draw ($(X1A)!.5!(X1B)+2*(d)$) node{$X_1$};
  \draw ($(X1A)!.5!(X1B)+(d)$) node{$\frac{n}{4}+k$};
  \draw ($(X2A)!.5!(X2B)+2*(d)$) node{$X_2$};
  \draw ($(X2A)!.5!(X2B)+(d)$) node{$\frac{n}{4}+k$};
  \draw ($(X3A)!.5!(X3B)+2*(d)$) node{$X_3$};
  \draw ($(X3A)!.5!(X3B)+(d)$) node{$\frac{n}{4}-k$};
  \draw ($(X4A)!.5!(X4B)+2*(d)$) node{$X_4$};
  \draw ($(X4A)!.5!(X4B)+(d)$) node{$\frac{n}{4}-k$};
  \draw ($(Y1A)!.5!(Y1B)-2*(d)$) node{$Y_1$};
  \draw ($(Y1A)!.5!(Y1B)-(d)$) node{$\frac{n}{4}-k$};
  \draw ($(Y2A)!.5!(Y2B)-2*(d)$) node{$Y_2$};
  \draw ($(Y2A)!.5!(Y2B)-(d)$) node{$\frac{n}{4}-k$};
  \draw ($(Y3A)!.5!(Y3B)-2*(d)$) node{$Y_3$};
  \draw ($(Y3A)!.5!(Y3B)-(d)$) node{$\frac{n}{4}+k$};
  \draw ($(Y4A)!.5!(Y4B)-2*(d)$) node{$Y_4$};
  \draw ($(Y4A)!.5!(Y4B)-(d)$) node{$\frac{n}{4}+k$};

  \draw[fill=pink, draw opacity=0, fill opacity=.6] (X1A) -- (X1B) -- (Y3B) -- (Y3A) -- (X1A);
  \draw[fill=pink, draw opacity=0, fill opacity=.6] (X2A) -- (X2B) -- (Y4B) -- (Y4A) -- (X2A);
  \draw[fill=pink, draw opacity=0, fill opacity=.6] (X3A) -- (X3B) -- (Y1B) -- (Y1A) -- (X3A);
  \draw[fill=pink, draw opacity=0, fill opacity=.6] (X4A) -- (X4B) -- (Y2B) -- (Y2A) -- (X4A);

  \draw[fill=sky, draw opacity=0, fill opacity=.4] (X1A) -- (X2B) -- (Y2B) -- (Y1A) -- (X1A);
  \draw[fill=sky, draw opacity=0, fill opacity=.4] (X3A) -- (X4B) -- (Y4B) -- (Y3A) -- (X3A);

  \draw ($(Y1A)!.5!(Y4B)-3*(d)$) node{Example \ref{exMediumDeg}};
\end{scope}

\begin{scope}[shift={(0,-6)}]
  \coordinate (X1A) at (0,2);
  \coordinate (X1B) at (.5,2);
  \coordinate (X2A) at (1,2);
  \coordinate (X2B) at (1.5,2);
  \coordinate (X3A) at (2,2);
  \coordinate (X3B) at (5,2);

  \coordinate (Y1A) at (0,0);
  \coordinate (Y1B) at (.5,0);
  \coordinate (Y2A) at (1,0);
  \coordinate (Y2B) at (1.5,0);
  \coordinate (Y3A) at (2,0);
  \coordinate (Y3B) at (5,0);

  \coordinate (d) at (0,.5);

  \draw (X1A) -- (X1B);
  \draw (X2A) -- (X2B);
  \draw (X3A) -- (X3B);
  \draw (Y1A) -- (Y1B);
  \draw (Y2A) -- (Y2B);
  \draw (Y3A) -- (Y3B);

  \draw ($(X1A)!.5!(X1B)+2*(d)$) node{$X_1$};
  \draw ($(X1A)!.5!(X1B)+(d)$) node{$\lceil k/2 \rceil$};
  \draw ($(X2A)!.5!(X2B)+2*(d)$) node{$X_2$};
  \draw ($(X2A)!.5!(X2B)+(d)$) node{$\lfloor k/2 \rfloor$};
  \draw ($(X3A)!.5!(X3B)+2*(d)$) node{$X_3$};
  \draw ($(X3A)!.5!(X3B)+(d)$) node{$n-k$};
  \draw ($(Y1A)!.5!(Y1B)-2*(d)$) node{$Y_1$};
  \draw ($(Y1A)!.5!(Y1B)-(d)$) node{$\lceil k/2 \rceil$};
  \draw ($(Y2A)!.5!(Y2B)-2*(d)$) node{$Y_2$};
  \draw ($(Y2A)!.5!(Y2B)-(d)$) node{$\lfloor k/2 \rfloor$};
  \draw ($(Y3A)!.5!(Y3B)-2*(d)$) node{$Y_3$};
  \draw ($(Y3A)!.5!(Y3B)-(d)$) node{$n-k$};

  \draw[fill=pink, draw opacity=0, fill opacity=.6] (X1A) -- (X1B) -- (Y3B) -- (Y3A) -- (X1A);
  \draw[fill=pink, draw opacity=0, fill opacity=.6] (X2A) -- (X2B) -- (Y1B) -- (Y1A) -- (X2A);
  \draw[fill=pink, draw opacity=0, fill opacity=.6] (X3A) -- (X3B) -- (Y2B) -- (Y2A) -- (X3A);

  \draw[fill=sky, draw opacity=0, fill opacity=.4] (X1A) -- (X1B) -- (Y2B) -- (Y2A) -- (X1A);
  \draw[fill=sky, draw opacity=0, fill opacity=.4] (X2A) -- (X2B) -- (Y3B) -- (Y3A) -- (X2A);
  \draw[fill=sky, draw opacity=0, fill opacity=.4] (X3A) -- (X3B) -- (Y1B) -- (Y1A) -- (X3A);

  \draw ($(Y1A)!.5!(Y3B)-3*(d)$) node{Example \ref{exSmallDeg}};
\end{scope}

\begin{scope}[shift={(7,-6)}]
  \coordinate (X1A) at (0,2);
  \coordinate (X1B) at (2,2);
  \coordinate (X2A) at (2.5,2);
  \coordinate (X2B) at (4.5,2);

  \coordinate (Y1A) at (0,0);
  \coordinate (Y1B) at (2,0);
  \coordinate (Y2A) at (2.5,0);
  \coordinate (Y2B) at (4.5,0);

  \coordinate (Xmid) at (6,2);
  \coordinate (Ymid) at (6,0);

  \draw (X1A) -- (X1B);
  \draw (Xmid) node{\textbullet};
  \draw (X2A) -- (X2B);

  \draw (Y1A) -- (Y1B);
  \draw (Ymid) node{\textbullet};
  \draw (Y2A) -- (Y2B);

  \draw[fill=pink, draw opacity=0, fill opacity=.6] (Y1A) -- (Y1B) -- (X2B) -- (X2A) -- (Y1A);
  \draw[fill=pink, draw opacity=0, fill opacity=.6] (Y2A) -- (Y2B) -- (X1B) -- (X1A) -- (Y2A);
  \draw[fill=sky, draw opacity=0, fill opacity=.4] (Y1A) -- (Y1B) -- (X1B) -- (X1A) -- (Y1A);
  \draw[fill=sky, draw opacity=0, fill opacity=.4] (Y2A) -- (Y2B) -- (X2B) -- (X2A) -- (Y2A);

  \draw (1,-.5) node{$\frac{n-1}{2}$};
  \draw (3.5,-.5) node{$\frac{n-1}{2}$};
  \draw (1,2.5) node{$\frac{n-1}{2}$};
  \draw (3.5,2.5) node{$\frac{n-1}{2}$};
  \draw (1,-1) node{$Y_1$};
  \draw (3.5,-1) node{$Y_2$};
  \draw (1,3) node{$X_1$};
  \draw (3.5,3) node{$X_2$};
  \draw ($(Ymid)-(0,.5)$) node{$y^*$};
  \draw ($(Xmid)+(0,.5)$) node{$x^*$};

  \draw[fill=sky, draw opacity=0, fill opacity=.4] (Xmid) -- (Y1A) -- (Y2B) -- (Xmid);
  \draw[draw=blue, thick] (Ymid) -- (Xmid);
  \draw[fill=pink, draw opacity=0, fill opacity=.6] (Ymid) -- (X1A) -- (X2B) -- (Ymid);

  \draw ($(Y1A)!.5!(Ymid)-3*(0,.5)$) node{Example \ref{exCycle}};
\end{scope}
\end{tikzpicture}
\caption{Tightness Examples}
\label{tightnessexamples}
\end{figure}

The following three examples show the asymptotic tightness of Theorem \ref{thm2} in the three different regimes.

\begin{example}\label{exLargeDeg}
Let $K_{n,n}$ be an $X,Y$-bipartite graph, and partition $X$ and $Y$ into $X_1, X_2$ and $Y_1, Y_2$, respectively, where $|X_1| = |Y_1| = \lceil n/2 \rceil$ and $|X_2| = |Y_2| = \lfloor n/2 \rfloor$. Color $[X_1, Y_1]$ and $[X_2, Y_2]$ blue, while coloring $[X_1, Y_2]$ and $[X_2, Y_1]$ red. The order of the longest monochromatic path (and cycle) is $2\lceil n/2 \rceil$.
\end{example}

\begin{example}\label{exMediumDeg}
Suppose $n$ is divisible by $4$. Let $0\leq k\leq n/12$. Let $G$ be a balanced $X,Y$-bipartite graph on $2n$ vertices defined as follows. Partition $X$ and $Y$ into $X_1, X_2, X_3, X_4$ and $Y_1, Y_2, Y_3, Y_4$ such that $|X_1| = |X_2| = |Y_3| = |Y_4| = n/4 + k$ and $|X_3| = |X_4| = |Y_1| = |Y_2| = n/4 - k$. Include all the edges in the following bipartite subgraphs (and no others) and color them as indicated: color $[X_1 \cup X_2, Y_1\cup  Y_2]$ and $[X_3 \cup X_4, Y_3 \cup Y_4]$ blue and color $[X_1, Y_3]$, $[X_2, Y_4]$, $[X_3, Y_1]$, and $[X_4, Y_2]$ red. Since $k \leq n/12$, the order of the longest monochromatic path (and cycle) in $G$ is $n - 4k$, while the minimum degree of $G$ is $3n/4 - k$.
\end{example}

\begin{example}\label{exSmallDeg}
Let $0\leq k\leq \frac{n}{3}$. Let $G$ be a balanced $X,Y$-bipartite graph on $2n$ vertices defined as follows. Partition $X$ and $Y$ into $X_1, X_2, X_3$ and $Y_1, Y_2, Y_3$ such that $|X_1| = |Y_1| = \lceil k/2 \rceil$, $|X_2| = |Y_2| = \lfloor k/2 \rfloor$, and $|X_3| = |Y_3| = n - k$. Include all the edges in the following bipartite subgraphs (and no others) and color them as indicated: color $[X_1, Y_2]$, $[X_2, Y_3]$, and $[X_3, Y_1]$ blue, and color $[X_1, Y_3]$, $[X_2, Y_1]$, and $[X_3, Y_2]$ red. Since $k \leq n/3$, the order of the longest monochromatic path (and cycle) in $G$ is $2\lceil k/2 \rceil$, while the minimum degree of $G$ is $k$.
\end{example}

The following example appears in \cite{ZSW} and shows that Corollary \ref{cor1} is tight.  

\begin{example}\label{exCycle}
Let $n$ be an odd positive integer.  Let $K_{n,n}$ be an $X,Y$-bipartite graph, and partition $X$ and $Y$ into $X_1, X_2,\{x^*\}$ and $Y_1, Y_2, \{y^*\}$, respectively, where $|X_1| = |Y_1| = \lfloor n/2 \rfloor$ and $|X_2| = |Y_2| = \lfloor n/2 \rfloor$. Color $[X_1, Y_1]$ and $[X_2, Y_2]$ red, while coloring $[X_1, Y_2]$ and $[X_2, Y_1]$ blue. Color $[\{x^*\}, Y]$ blue and $[\{y^*\}, X\setminus \{x^*\}]$ red.  The order of the longest monochromatic cycle is $2\floor{n/2}$.
\end{example}

\section{Monochromatic balanced components}\label{sec:comps}

In this section we prove a result about the size of a largest monochromatic ``balanced component'' in a 2-colored bipartite graph with a given minimum degree.  The main purpose of Proposition \ref{lem:2col_comp} is to provide the first step for upcoming proof of Theorem \ref{con_match}, but since the statement is interesting on its own and is instructive to read as a warm-up, we separate it from the proof of Theorem \ref{con_match}.  Note that while Proposition \ref{lem:2col_comp} is weaker than Theorem \ref{con_match}, the bounds are still best possible by Examples \ref{exLargeDeg}, \ref{exMediumDeg}, and \ref{exSmallDeg}.

We begin with the following result, due to Liu, Morris, Prince \cite{LMP} and independently Mubayi \cite{Mub}, which strengthens an earlier result of Gy\'arf\'as \cite{Gy77}.  We provide the proof for completeness.  A \emph{double star} is a tree having at most two non-leaves.

\begin{theorem}\label{thm:gy}
In any $r$-coloring of $K_{m,n}$, there exists a monochromatic component on at least $\frac{m+n}{r}$ vertices.

In fact, if $G$ is a bipartite graph having parts of sizes $m$ and $n$, with $e(G)\geq \alpha mn$, then $G$ contains a monochromatic component on at least $\alpha(m+n)$ vertices.
\end{theorem}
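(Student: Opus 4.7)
My plan is to reduce the $r$-coloring statement to the density statement and then handle the density statement via a double-star argument. For the reduction, observe that in any $r$-coloring of $E(K_{m,n})$, some color class $G$ has at least $mn/r$ edges, i.e.\ $\alpha = 1/r$. So it suffices to prove the second (density) assertion: any bipartite graph $G$ with parts of sizes $m,n$ and $e(G) \geq \alpha mn$ contains a connected subgraph on at least $\alpha(m+n)$ vertices.

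For the density statement, the key structural fact is that any edge $xy \in E(G)$ with $x\in X$, $y\in Y$ spans a double star with vertex set $\{x,y\}\cup (N(x)\setminus\{y\})\cup(N(y)\setminus\{x\})$, which is connected and has exactly $d(x)+d(y)$ vertices (since $x\in N(y)$ and $y\in N(x)$). Hence the component of $G$ containing $xy$ has at least $d(x)+d(y)$ vertices, and it suffices to exhibit an edge $xy$ with $d(x)+d(y)\geq \alpha(m+n)$.

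To produce such an edge I would average $d(x)+d(y)$ over $xy\in E(G)$. Each $x\in X$ contributes $d(x)$ to the sum $d(x)$ times, so
\[
\sum_{xy\in E(G)} \bigl(d(x)+d(y)\bigr) \;=\; \sum_{x\in X}d(x)^{2} \;+\; \sum_{y\in Y}d(y)^{2}.
\]
By Cauchy--Schwarz, $\sum_{x\in X}d(x)^{2}\geq e(G)^{2}/m$ and $\sum_{y\in Y}d(y)^{2}\geq e(G)^{2}/n$. Dividing by $e(G)$, the average value of $d(x)+d(y)$ over edges is at least $e(G)\bigl(\tfrac{1}{m}+\tfrac{1}{n}\bigr)=\frac{e(G)(m+n)}{mn}\geq \alpha(m+n)$. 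Therefore some edge $xy$ attains at least this average, and the double star it spans is the desired monochromatic connected subgraph.

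The proof is quite short and there is no real obstacle; the one small subtlety is simply recognizing that the double-star vertex count $d(x)+d(y)$ avoids any double-counting of the endpoints themselves (because they already appear as each other's neighbors), which is exactly what makes the Cauchy--Schwarz bound tight in the sense needed.
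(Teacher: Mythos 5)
Your proof is correct and follows essentially the same argument as the paper: reduce the $r$-coloring statement to the density statement by pigeonhole, then average $d(x)+d(y)$ over edges, rewrite the sum as $\sum_x d(x)^2+\sum_y d(y)^2$, and apply Cauchy--Schwarz to bound the average double-star size by $\alpha(m+n)$. You simply make explicit a couple of points the paper leaves implicit (the pigeonhole reduction and the no-double-counting remark for the double star's vertex count).
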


\begin{proof}
The average size of a double star in $G$ is
\begin{align*}
\frac{1}{e(G)}\sum_{xy\in E(G)}(d(x)+d(y))&=\frac{1}{e(G)}\left(\sum_{x\in X}d(x)^2+\sum_{y\in Y}d(y)^2\right)\\
&\geq \frac{1}{e(G)}\left(\frac{e(G)^2}{m}+\frac{e(G)^2}{n}\right)
=e(G)\frac{m+n}{mn}
\geq \alpha(m+n),
\end{align*}
so there exists an edge $xy$ with $d(x)+d(y)\geq \alpha(m+n)$.
\end{proof}

\begin{proposition}\label{lem:2col_comp}
Let $G$ be a balanced $X,Y$-bipartite graph on $2n$ vertices.  If $\delta(G) = \delta n$, then in every 2-coloring of $G$, there exists a monochromatic component $H$ such that 
\[
|H\cap X|, |H\cap Y|\geq  
\begin{cases}
      \delta n/2,& \for 0\leq \delta \leq 2/3 \\
      (2\delta-1)n,& \for 2/3\leq \delta\leq 3/4 \\
      n/2,& \for 3/4\leq \delta\leq 1
    \end{cases}.
\]
\end{proposition}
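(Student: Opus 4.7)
The plan is to apply Theorem \ref{thm:gy} to find a large monochromatic component and then bootstrap balance via a degree-swapping argument, arguing by contradiction. Since $\delta(G) \geq \delta n$ implies $e(G) \geq \delta n^2$, the majority color (say red) has at least $\delta n^2/2$ edges. Theorem \ref{thm:gy} applied to the red subgraph (which has density $\geq \delta/2$) yields a red edge $xy$ with $d_R(x) + d_R(y) \geq \delta n$, so the red component $R^*$ containing $xy$ satisfies $|R^* \cap Y| \geq d_R(x)$ and $|R^* \cap X| \geq d_R(y)$.

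Assume for contradiction that no monochromatic component has both sides of size $\geq f(\delta)$. After swapping $X,Y$ if necessary, we may take $|R^* \cap X| < f(\delta)$, whence every $y \in R^* \cap Y$ has $d_R(y) \leq |R^* \cap X| < f(\delta)$ and therefore $d_B(y) > \delta n - f(\delta)$; consequently the blue component $B(y)$ of any such $y$ has $|B(y) \cap X| \geq d_B(y) > \delta n - f(\delta)$. This is the basic swap: an unbalanced red component forces its $Y$-vertices to be ``blue-heavy'', creating large blue components on the $X$-side.

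In the small-degree regime $\delta \leq 2/3$, the identity $\delta n - f(\delta) = f(\delta)$ allows the bootstrap to close in one more step: either $B(y)$ is already balanced (contradiction), or every $x' \in B(y) \cap X$ has $d_R(x') > f(\delta)$, and --- unless $B(y) \cap X \subseteq R^*$ (which forces $|R^* \cap X| > f(\delta)$, a contradiction) --- we obtain a second unbalanced red component $R' \neq R^*$ with $|R' \cap Y| > f(\delta)$. Iterating this process generates a collection of pairwise $Y$-disjoint unbalanced red components, each with $Y$-share of size $> f(\delta)$; since $|Y| = n$, at most $\lfloor n/f(\delta) \rfloor \leq 2/\delta$ such components exist, and a counting argument using the minimum-degree condition on the $X$-coverage closes the case. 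The high-degree regime $\delta \geq 3/4$ is similar, using the observation that at most one red (resp.\ blue) component can have $\geq n/2$ vertices on either side, so the degree constraints leave no room for all monochromatic components to be unbalanced.

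The main technical obstacle lies in the medium regime $\delta \in [2/3, 3/4]$, where $f(\delta) = (2\delta-1)n > \delta n/2$ and the one-step bootstrap does not close because $\delta n - f(\delta) < f(\delta)$. Here one must iterate the color-swap carefully, simultaneously tracking the sizes of unbalanced monochromatic components in both colors, and derive the sharper bound $(2\delta-1)n$ from the resulting system of counting inequalities --- a bound that is exactly saturated by Example \ref{exMediumDeg}, which is the source of the piecewise shape of $f(\delta)$.
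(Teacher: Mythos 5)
The proposal is incomplete in exactly the place where the content of the proposition lives. You explicitly flag the medium regime $\delta \in [2/3, 3/4]$ as ``the main technical obstacle'' and then do not prove it — you state that one ``must iterate the color-swap carefully'' and ``derive the sharper bound $(2\delta-1)n$ from the resulting system of counting inequalities,'' but no such system is written down. This is the regime where the piecewise shape of $f$ actually matters, so the omission is a genuine gap, not a routine detail. The high-degree regime $\delta \geq 3/4$ is likewise only sketched (``the degree constraints leave no room for all monochromatic components to be unbalanced'' is a conclusion, not an argument). Even in the small-degree regime, the crucial ``counting argument using the minimum-degree condition on the $X$-coverage closes the case'' is asserted rather than carried out, and it is not clear that the iteration you describe terminates with the stated pigeonhole bound: the generated red components need not actually be pairwise $Y$-disjoint without further argument, since you are only guaranteed that each new component differs from the previous one, not from all earlier ones.

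For comparison, the paper handles all of $\delta > 2/3$ in one clean step, with no iteration: take the largest monochromatic component $H_1$ (by total order), which exists with $|H_1| \geq \delta n$ by Theorem~\ref{thm:gy}; if its smaller side $y_1$ were less than $\min\{n/2, (2\delta-1)n\}$, then every edge from $X_1 = H_1 \cap X$ to $Y_2 = Y \setminus Y_1$ is red, the degree condition forces a single red component $H_2$ to cover all of $X_1 \cup Y_2$, and since $y_1 < n/2$ gives $y_2 > y_1$, this $H_2$ is strictly larger than $H_1$, a contradiction. No separate treatment of $\delta \in [2/3, 3/4]$ versus $\delta \geq 3/4$ is needed beyond the choice of $\min\{n/2, (2\delta-1)n\}$. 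For $\delta \leq 2/3$, the paper does not iterate either; it fixes a maximal collection of same-color components each having $\geq \delta n/2$ vertices on a fixed side and runs a single pigeonhole count against that maximal collection. Your ``bootstrap'' instinct is in the right spirit, but to make it rigorous you would essentially need to replace the iteration with the paper's extremality argument (maximal component in the large-degree case, maximal collection in the small-degree case), which is what actually makes the counting close.
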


\begin{proof}
Since $e(G)\geq \delta n^2$, one of the color classes has at least $\delta n^2/2$ edges, so by Theorem \ref{thm:gy}, there exists a monochromatic component on at least $\delta n$ vertices.

First suppose $\delta>2/3$. Let $H_1$ be the largest monochromatic, say blue, component and let $X_1=H_1\cap X$, $X_2 = X \setminus X_1$, $Y_1=H_1\cap Y$, and $Y_2 = Y \setminus Y_1$.  Let $x_i=|X_i|$ and $y_i=|Y_i|$.  We have $|H_1|\geq \delta n$, so without loss of generality, suppose $x_1\geq \delta n/2$ and $x_1\geq y_1$.  We note that if $x_1\geq y_1\geq \min\{n/2, (2\delta-1)n\}$, then we are done; so suppose $y_1<\min\{n/2, (2\delta-1)n\}$.  We have
$$
\delta_R(Y_2, X_1)\geq \delta n-x_2=x_1-(1-\delta)n\geq (3\delta/2-1)n>0
$$
and for all $u_1, u_1'\in X_1$,
$$
|N_R(u_1)\cap N_R(u_1')\cap Y_2|\geq 2(\delta n-y_1)-y_2=(2\delta -1)n-y_1>0.
$$
Which together imply that there is a red component $H_2$ covering $X_1\cup Y_2$.  Since $y_1<n/2$, we have $y_2>n/2>y_1$ and thus $|H_2|\geq x_1+y_2>x_1+y_1=|H_1|$ contradicting the maximality of $H_1$.  

Now suppose $0\leq \delta \leq 2/3$ (although we note that the following argument applies for all $\delta\geq 0$).  Let $H_1, \dots, H_k$ be the largest collection (as in $k$ is maximum) of monochromatic components having the property that $H_1, \dots, H_k$ have the same color and $|H_i\cap X|\geq \delta n/2$ for all $i\in [k]$ or $|H_i\cap Y|\geq \delta n/2$ for all $i\in [k]$.  Without loss of generality suppose $H_i$ is blue and $|H_i\cap X|\geq \delta n/2$ for all $i\in [k]$.  Let $X_i:=H_i\cap X$ and note that $|X_1\cup \dots\cup X_k|\geq k\delta n/2$.  Also note that for all $x\in X_1\cup \dots\cup X_k$, we would be done unless $d_B(x)<\delta n/2$ and consequently $d_R(x)>\delta n/2$.  So each vertex from $X_1\cup \dots \cup X_k$ is in a red component having more than $\delta n/2$ vertices in $Y$.  By the maximality of $H_1, \dots, H_k$, there are at most $k$ such red components and thus there are at least $|X_1\cup \dots\cup X_k|/k\geq \delta n/2$ vertices from $X_1\cup \dots\cup X_k$ which belong to the same red component and we are done.  
\end{proof}

\section{Monochromatic connected matchings}\label{sec:matching}

A \emph{connected matching} $M$ in a graph $G$ is a matching having the property that every edge from $M$ lies inside the same connected component in $G$.

In this section, we prove two results.  Theorem \ref{con_match} is a ``pure'' result which provides a bound on the size of a monochromatic connected matching in a 2-colored bipartite graph with a given minimum degree; this result is again best possible by Examples \ref{exLargeDeg}, \ref{exMediumDeg}, and \ref{exSmallDeg}.  We will use Theorem \ref{con_match} to prove Theorem \ref{thm2};  however, to prove Theorem \ref{thm1}, we must prove a stability version of Theorem \ref{con_match} and this is done in Theorem \ref{stable_match}.

\begin{theorem}\label{con_match}
Let $G$ be a balanced bipartite graph on $2n$ vertices.  If $\delta(G)= \delta n$, then in every 2-coloring of the edges of $G$ there exists a monochromatic connected matching of size at least 
\[
\begin{cases}
      \delta n/2,& \for 0\leq \delta \leq 2/3 \\
      (2\delta-1)n,& \for 2/3\leq \delta\leq 3/4 \\
      n/2,& \for 3/4\leq \delta\leq 1
\end{cases}.
\]
\end{theorem}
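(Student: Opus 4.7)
The plan is to leverage Proposition~\ref{lem:2col_comp} and then convert the component statement into a matching statement via K\"onig's theorem. Apply Proposition~\ref{lem:2col_comp} to obtain a monochromatic, say blue, component $H$ with sides $A:=H\cap X$ and $B:=H\cap Y$ both of size at least $s$, where $s$ denotes the regime-dependent target. I will show that $H$ already contains a blue connected matching of size $s$; otherwise I construct a red connected matching exceeding the hypothesized maximum, a contradiction.

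Suppose for contradiction that the maximum monochromatic connected matching has size $m<s$. Then the maximum matching in $H$ has size at most $m$, so by K\"onig's theorem $H$ admits a vertex cover $C_A\cup C_B\subseteq A\cup B$ with $|C_A|+|C_B|\le m<s$. Set $X':=A\setminus C_A$ and $Y':=B\setminus C_B$. Every $G$-edge between $X'$ and $Y'$ is red (a blue edge there would evade the cover), and since $H$ is the blue component of each vertex of $A$, every blue neighbor of $x\in A$ lies in $B$. Hence for $x\in X'$, $d_B(x)\le |C_B|$, so $d_R(x)\ge \delta n-|C_B|$, and in fact every edge from $x$ to $Y\setminus C_B=(Y\setminus B)\cup Y'$ is red. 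Symmetric bounds hold for $Y'$.

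To locate a large red connected matching, consider the red bipartite graph on $(X\setminus C_A)\cup (Y\setminus C_B)$; every $G$-edge there is red, except possibly those between $X\setminus A$ and $Y\setminus B$. Given a minimum red vertex cover $U_X\cup U_Y$ of this bipartite graph, any $x\in X'\setminus U_X$ forces all of its red edges into $U_Y$, yielding $|U_Y|\ge \delta n-|C_B|$; symmetrically, if $Y'\not\subseteq U_Y$ then $|U_X|\ge \delta n-|C_A|$. A short case analysis on whether $X'\subseteq U_X$ and $Y'\subseteq U_Y$ (WLOG $|C_B|\le |C_A|$) then shows the min red cover, and hence a max red matching in this bipartite graph, is at least $s$ in each regime. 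For connectedness, any two vertices of $X'$ share at least $2(\delta n-|C_B|)-(n-|C_B|)=2\delta n-n-|C_B|$ common red neighbors in $Y\setminus C_B$, which is positive in the regimes considered; similarly for $Y'$, so $X'\cup Y'$ lies in a single red component, which hosts the matching.

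The main obstacle is the low-degree regime $0\le \delta\le 2/3$, where $s=\delta n/2$ and the minimum degree is too weak to guarantee that one balanced component dominates the picture, so the starting component from Proposition~\ref{lem:2col_comp} need not yield the matching directly. There I would adapt the multi-component averaging argument used for the analogous case of Proposition~\ref{lem:2col_comp}: take a maximal collection of same-colored components each with a given large side, use the K\"onig analysis above to show that most unsaturated vertices have large opposite-color degree, and average over the collection to locate a single opposite-color component that contains a connected matching of size $\delta n/2$. Simultaneously maintaining the matching-size lower bound and the single-component containment in this regime is the most delicate part of the proof.
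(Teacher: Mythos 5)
Your high-level plan (start from Proposition~\ref{lem:2col_comp}, apply K\"onig's theorem to a minimum blue cover $C_A\cup C_B$, and then argue about red covers/matchings on $(X\setminus C_A)\cup(Y\setminus C_B)$) is the same family of argument the paper uses, and several of your intermediate observations are correct: all $G$-edges from $X'=A\setminus C_A$ to $Y\setminus C_B$ are red, the degree bound $|U_Y|\geq\delta n-|C_B|$ when some $x\in X'\setminus U_X$ exists is right, and the common-neighbor count that places $X'\cup Y'$ in one red component is essentially the paper's computation. But the ``short case analysis'' you defer to does \emph{not} go through as stated, and this is not a gap you can wave away.

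Concretely, take the case $X'\subseteq U_X$ and $Y'\not\subseteq U_Y$. The only bounds your setup yields are $|U_X|\geq\delta n-|C_A|$ (from an uncovered $y\in Y'$) and $|U_X|\geq|X'|\geq s-|C_A|$, with no lower bound on $|U_Y|$. For $2/3<\delta\leq 3/4$ with $s=(2\delta-1)n$ you would need $\delta n-|C_A|\geq s$, i.e.\ $|C_A|\leq(1-\delta)n$, but $|C_A|$ is only bounded by $m<s=(2\delta-1)n$, and $(2\delta-1)n>(1-\delta)n$ precisely on this range. For instance with $\delta=0.7$, $s=0.4n$, $|C_A|=0.39n$, $|C_B|=0$, you only get $|U_X|\geq 0.31n<s$, so your cover lower bound fails. (Relabelling so the larger of $|C_A|,|C_B|$ is on the other side just moves the problem to the symmetric case.) The paper avoids this by doing something genuinely different: it takes $T$ to be a minimum cover of the red \emph{component} $H_2$ of $G$ (not of the bipartite graph restricted to $(X\setminus C_A)\cup(Y\setminus C_B)$), finds $u\in(X_1'\cup Y_1')\setminus T$, obtains $|S_Y|+|T_Y|\geq\delta n$, \emph{and then finds a second vertex} $v'\in Y\setminus(S_Y\cup T_Y)$ whose entire $G$-neighborhood must lie in $S_X\cup T_X\cup X_2$, producing the contradiction $m>(2\delta-1)n$. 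This ``two uncovered vertices, one on each side'' step, combined with using the full degree condition of $G$ for both of them, is exactly what your single cover-size bound is missing.

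Two further issues. First, even granting a large red matching in the bipartite graph on $(X\setminus C_A)\cup(Y\setminus C_B)$, some matching edges may run between $X\setminus A$ and $Y\setminus B$ and have no reason to lie in the red component containing $X'\cup Y'$; ``which hosts the matching'' needs justification (e.g.\ by restricting to edges incident to $X'\cup Y'$, which in turn changes the cover bookkeeping). Second, for $0\leq\delta\leq2/3$ you correctly observe the component argument alone is too weak, but you only gesture at adapting the averaging argument from Proposition~\ref{lem:2col_comp}; the paper's actual proof in this range introduces the four large-degree sets $L_X^B,L_X^R,L_Y^B,L_Y^R$, takes a maximum one, and runs a cover-based double-counting argument across all blue components meeting it. That argument has real content (e.g.\ the chain $L_i\subseteq C_i$, then $|Y_i\setminus C_i|>|L_i|$, then $Y_i\setminus C_i\subseteq L_Y^R$) and is not an immediate adaptation of the component result; deferring it leaves roughly half the theorem unproved.
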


\begin{proof}
First suppose $\delta>2/3$.  Let $H_1$ be the largest monochromatic, say blue, component satisfying $|H_1\cap X|, |H_1\cap Y|\geq \min\{n/2, (2\delta-1)n\}=:m$, which exists by Proposition \ref{lem:2col_comp}.  
Note that by splitting into cases depending on whether $\delta\geq 3/4$, in which case $m=n/2$, or $2/3<\delta<3/4$, in which case $m=(2\delta-1)n$, we obtain
\begin{equation}\label{md}
m\geq 2(1-\delta)n
\end{equation}

Let $X_1=H_1\cap X$, $X_2=X\setminus X_1$, $Y_1=H_1\cap Y$, $Y_2=Y\setminus Y_1$ and $x_i=|X_i|$, $y_i=|Y_i|$ for $i\in [2]$.  Let $S$ be a minimum vertex cover of $H_1$, let $S_X=S\cap X$, and $S_Y=S\cap Y$, and note that we must have $|S|<m$ or else by K\"onig's theorem we have the desired matching.  Let $X_1'=X_1\setminus S_X$, $Y_1'=Y_1\setminus S_Y$ and $x_1'=|X_1'|$, $y_1'=|Y_1'|$.  Since $|S|<m\leq |H_1\cap X|, |H_1\cap Y|$, we have $x_1', y_1'>0$ and 
\begin{equation}\label{x1'y1'}
x_1'+y_1'=|H_1|-|S|>m.
\end{equation}

For all $u, u'\in X_1'$, we have
$$
|N_R(u)\cap N_R(u')\cap (Y\setminus S_Y)|\geq 2(\delta n-|S_Y|)-(n-|S_Y|)=(2\delta-1)n-|S_Y|>0
$$
and for all $v, v'\in Y_1'$, we have 
$$
|N_R(v)\cap N_R(v')\cap (X\setminus S_X)|\geq 2(\delta n-|S_X|)-(n-|S_X|)=(2\delta-1)n-|S_X|>0
$$
which together imply that there is a red component covering $X_1'$ and a red component covering $Y_1'$.  By \eqref{x1'y1'}, we have, say, $x_1'>m/2$.  So for all $v\in Y_1'$, we have by \eqref{md}, $d_R(v, X_1')\geq \delta n-(n-x_1')=x_1'-(1-\delta)n>m/2-(1-\delta)n\geq 0$.  Thus there is a single red component $H_2$ which covers $X_1'\cup Y_1'$.

As before, let $T$ be a minimum vertex cover of $H_2$, let $T_X=T\cap X$, and $T_Y=T\cap Y$, and note that we must have $|T|<m$ or else by K\"onig's theorem we have the desired matching.  Note that by \eqref{x1'y1'}, $(X_1'\cup Y_1')\setminus T\neq \emptyset$, so without loss of generality, say $u\in X_1'\setminus T$.  Note that $u\in H_1\cap H_2$ and $u\not\in S\cup T$, so $N(u)\subseteq S_Y\cup T_Y$, which implies $|S_Y|+|T_Y|\geq \delta n$ and thus 
\begin{equation}\label{SXTX}
|S_X|+|T_X|<2m-\delta n.  
\end{equation}
Furthermore, since $|S_Y|+|T_Y|<2m\leq n$, there exists $v'\in Y\setminus (S_Y\cup T_Y)$.  Note that $N(v')\subseteq (S_X\cup T_X\cup X_2)$ and thus by \eqref{SXTX}, $$\delta n\leq |S_X\cup T_X\cup X_2|\leq |S_X|+|T_X|+x_2<2m-\delta n+x_2\leq 2m-\delta n+n-m,$$
and thus $m>(2\delta-1)n$, a contradiction.

Now suppose $0\leq \delta \leq 2/3$ (although we note that the following argument applies for all $\delta\geq 0$). Define the following four sets:
\[L_X^B = \{ v \in X : d_B(v) \geq \delta n/2 \}; \quad L_X^R = \{ v \in X : d_R(v) \geq \delta n/2 \};\]
\[L_Y^B = \{ v \in Y : d_B(v) \geq \delta n/2 \}; \quad L_Y^R = \{ v \in Y : d_R(v) \geq \delta n/2 \}.\]
Without loss of generality let $L_X^B$ be a set of maximum order (out of the four sets).  Let $H_1, \dots, H_t$ be the blue components which intersect $L_X^B$ and for each $i\in [t]$, let $L_i=L_X^B\cap H_i$, let $X_i=H_i\cap X$, and let $Y_i=H_i\cap Y$. For each $i\in [t]$, let $C_i$ be a minimum vertex cover for $H_i$ and suppose that $|C_i|<\delta n/2$ for all $i\in [t]$, otherwise we would be done by K\"onig's theorem.  If there exists $v \in L_i \setminus C_i$, then $N_B(v) \subseteq C_i\cap Y$, but $|C_i\cap Y|\leq |C_i| < \delta n/2 \leq d_B(v)$. Thus $L_i \subseteq C_i$, so $k_i:=|C_i\cap X| \geq |L_i|$.  Note that since $|Y_i|\geq \delta n/2$ and $|C_i|<\delta n/2$, we have
\[
|Y_i \setminus C_i| \geq |Y_i| - (|C_i| - k_i) > k_i\geq  |L_i|.
\] 
Also for all $i\in [t]$ and all $v\in Y_i\setminus C_i$ we have $d_B(v) \leq k_i \leq |C_i| < \delta n/2$, which implies $d_R(v) > \delta n/2$ and thus $Y_i\setminus C_i \subseteq L_Y^R$.  So 
\[
|L_Y^R|\geq \sum_{i\in[t]}|Y_i\setminus C_i|> \sum_{i\in [t]}|L_i|=|L_X^B|
\]
contradicting the choice of $L_X^B$.
\end{proof}

\begin{theorem}\label{stable_match}
For all $\eta>0$ there exists $n_0$ such that if $G$ is a balanced bipartite graph on $2n\geq 2n_0$ vertices with $\delta(G)> (3/4+\eta)n$, then in every 2-coloring of the edges of $G$ there exists a monochromatic connected matching of size at least $(1/2+\eta)n$ or the coloring of $G$ is $2\eta$-extremal.
\end{theorem}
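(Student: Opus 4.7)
My plan is to adapt the proof of Theorem~\ref{con_match} for $\delta > 3/4$ by carefully tracking structure in the failure case. Assume $G$ admits no monochromatic connected matching of size $(1/2+\eta)n$; I will show the coloring is $2\eta$-extremal. By Proposition~\ref{lem:2col_comp} there is a monochromatic (WLOG blue) component $H_1$ with $|X_1|,|Y_1|\geq n/2$. By K\"onig and the matching assumption, $H_1$ has a minimum vertex cover $S$ with $|S|<(1/2+\eta)n$. Set $X_1'=X_1\setminus S_X$, $Y_1'=Y_1\setminus S_Y$. Every edge in $[X_1',Y_1']$ is red, and exactly as in Theorem~\ref{con_match} (using $\delta>3/4+\eta$) the set $X_1'\cup Y_1'$ lies inside a single red component $H_2$, whose minimum vertex cover $T$ satisfies $|T|<(1/2+\eta)n$.

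The key case split is this: if both $X_1'\setminus T_X\neq\emptyset$ and $Y_1'\setminus T_Y\neq\emptyset$, then picking $u$ and $v$ from these sets yields $N(u)\subseteq S_Y\cup T_Y$ and $N(v)\subseteq S_X\cup T_X$, hence $|S|+|T|\geq d(u)+d(v)\geq 2(3/4+\eta)n>(1+2\eta)n$, contradicting $|S|+|T|<(1+2\eta)n$. Swapping $X$ and $Y$ if necessary, we may assume $Y_1'\subseteq T_Y$.

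With this structure in hand, I would propose the extremal partition $X':=X_1'$, $Y_1^{\star}:=S_Y$, $Y_2^{\star}:=Y\setminus S_Y$. The containment $N_B(X_1')\subseteq S_Y$ (blue edges from $X_1'$ stay in $H_1$ and are covered by $S$) yields $e_B(X',Y_2^{\star})=0$ for free. The remaining requirements---sizes $|X_1'|,|S_Y|,|Y\setminus S_Y|\geq(1/2-2\eta)n$ and the red edge bound $e_R(X',Y_1^{\star})\leq 2\eta n^2$---require tight bookkeeping. The plan is to show that in the ``good'' subcase where there is some $u\in X_1'\setminus T_X$, the bound $|S_Y|+|T_Y|\geq (3/4+\eta)n$ (from $N(u)\subseteq S_Y\cup T_Y$) combines with $|S|+|T|<(1+2\eta)n$ to force $|S_X|,|T_X|\leq 2\eta n$, while $|T_Y\cap S_Y|\leq |T_Y|-|Y_1'|$ is small since $T_Y\supseteq Y_1'$ already consumes most of $T_Y$'s budget. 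Every red edge from $X_1'$ lies in $H_2$ and is thus covered by $T$, so red edges from $X_1'\setminus T_X$ into $S_Y$ land inside $T_Y\cap S_Y$, whose smallness gives the required edge bound; the remaining contribution from $X_1'\cap T_X$ is controlled by $|T_X|\leq 2\eta n$.

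The main obstacle is the degenerate subcases: (i) $X_1'\subseteq T_X$ (so no witness $u$), and (ii) $Y\setminus(S_Y\cup T_Y)\neq\emptyset$. Case (i) is handled by a further $X\leftrightarrow Y$ swap, reducing to the symmetric situation. For (ii), I would argue that any $v'\in Y\setminus(S_Y\cup T_Y)$ cannot lie in $H_2$: if it did, then $N(v')\subseteq X_2\cup T_X$ together with $|X_2|\leq n/2$ would force $|T_X|\geq (1/4+\eta)n$, hence $|T_Y|<n/4$, contradicting $|S_Y|+|T_Y|\geq(3/4+\eta)n$ and $|S_Y|\leq(1/2+\eta)n$. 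Hence every such $v'$ lies outside $H_2$, and a separate degree argument as in Theorem~\ref{con_match} (noting that $N_R(v')\cap X_1'=\emptyset$ and $N_B(v')\subseteq X_2$) constrains $x_1+x_1'$, eventually feeding back into the bookkeeping.
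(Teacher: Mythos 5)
Your proposal follows the same general architecture as the paper's proof (extract a blue component $H_1$ with cover $S$, pass to a red component $H_2$ covering $X_1'\cup Y_1'$ with cover $T$, then do bookkeeping on $S$ and $T$ to exhibit the extremal partition), but there is a structural gap right at the start that propagates through the rest.

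The paper opens with a case split you omit: if $|X_1|<(1/2+\eta)n$ or $|Y_1|<(1/2+\eta)n$, the extremal partition is exhibited directly as $X_1, Y_1, Y_2$, using the maximality of $H_1$ to argue that only $\eta n$ vertices of $Y_1$ have a red neighbor in $X_1$. Only in the remaining case (both parts $\geq (1/2+\eta)n$) does the vertex-cover argument begin. You work with $|X_1|,|Y_1|\geq n/2$ only. This matters in two concrete ways. First, the step ``exactly as in Theorem~\ref{con_match}, $X_1'\cup Y_1'$ lies in a single red component'' needs $x_1',y_1'>0$; in Theorem~\ref{con_match} that is automatic because $|S|<n/2\leq x_1$, but here $|S|<(1/2+\eta)n$ may exceed $x_1$, so $X_1'$ could be empty. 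Second, and more seriously, the paper's restriction to $x_1,y_1\geq (1/2+\eta)n$ forces $x_1'+y_1'=x_1+y_1-|S|>(1/2+\eta)n>|T|$, which guarantees $(X_1'\cup Y_1')\setminus T\neq\emptyset$. That is exactly what rules out your case~(i): with only $x_1'+y_1'>(1/2-\eta)n$, it is entirely possible that $X_1'\subseteq T_X$ \emph{and} $Y_1'\subseteq T_Y$, and your fix (``a further $X\leftrightarrow Y$ swap'') is a no-op because that situation is already symmetric in $X$ and $Y$. So case~(i) is a genuine hole.

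There is also an arithmetic slip: $|S_Y|+|T_Y|\geq(3/4+\eta)n$ together with $|S|+|T|<(1+2\eta)n$ gives only $|S_X|+|T_X|<(1/4+\eta)n$, not $\leq 2\eta n$. To get the bound you use, you must first settle case~(ii) and conclude $Y\subseteq S_Y\cup T_Y$; only then does $|S_Y|+|T_Y|\geq n$ give $|S_X|+|T_X|<2\eta n$, and also $|S_Y\cap T_Y|<2\eta n$, which is what actually makes your bound on $e_R(X_1',S_Y)$ (via $T_Y\cap S_Y$ and $T_X$) small. The paper disposes of (ii) in one line: any $v\in Y\setminus(S_Y\cup T_Y)$ has $N(v)\subseteq S_X\cup T_X\cup X_2$, of size $<(1/4+\eta)n+(1/2-\eta)n=3n/4<d(v)$, a contradiction. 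Your sketch for (ii) (showing $v\notin H_2$ and ``feeding back into the bookkeeping'') is left incomplete, whereas this direct degree contradiction is what is actually needed, and in the correct logical order, before the $2\eta n$ bounds can be claimed.

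In short: add the paper's first case split (which both guarantees $X_1'\cup Y_1'\not\subseteq T$ and handles the small-$x_1$/small-$y_1$ regime directly), and resolve case~(ii) by the degree contradiction \emph{before} invoking $|S_X|,|T_X|<2\eta n$; with those repairs your route coincides with the paper's.
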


\begin{proof}
Let $H_1$ be a largest monochromatic, say blue, component satisfying $|H_1\cap X|, |H_1\cap Y|\geq n/2$ (such an $H_1$ exists by Proposition \ref{lem:2col_comp}).  Let $X_1=H_1\cap X$, $X_2=X\setminus X_1$, $Y_1=H_1\cap Y$, $Y_2=Y\setminus Y_1$ and $x_i=|X_i|$, $y_i=|Y_i|$ for $i\in [2]$.  Note that all edges in $[X_1, Y_2]$ and $[Y_1, X_2]$ are red and for all $u,u'\in X_2$,
\begin{equation}\label{x2y1}
|(N_R(u)\cap N_R(u'))\cap Y_1|\geq 2((3/4+\eta)n-y_2)-y_1=(1/2+2\eta)n-y_2\geq 2\eta n>0
\end{equation}
and for all $v,v'\in Y_2$,
\begin{equation}\label{y2x1}
|(N_R(v)\cap N_R(v'))\cap X_1|\geq 2((3/4+\eta)n-x_2)-x_1=(1/2+2\eta)n-x_2\geq 2\eta n>0.
\end{equation}
Furthermore, 
\begin{equation}\label{12}
\delta_R(X_1,Y_2)\geq (3/4+\eta)n-y_1 ~\text{ and }~ \delta_R(Y_1,X_2)\geq (3/4+\eta)n-x_1.
\end{equation}

First suppose $y_1<(1/2+\eta)n$ or $x_1<(1/2+\eta)n$.  If $y_1<(1/2+\eta)n$, then by \eqref{y2x1} and \eqref{12}, there is a red component covering $[X_1, Y_2]$.  By the maximality of $H_1$, there are fewer than $\eta n$ vertices in $Y_1$ which have a red neighbor in $X_1$, so we have an $\eta$-extremal coloring as witnessed by $X_1, Y_1, Y_2$.  A similar calculation shows that if $x_1<(1/2+\eta)n$, then we have an $\eta$-extremal coloring as witnessed by $Y_1, X_1, X_2$.

So suppose $x_1\geq (1/2+\eta)n$ and $y_1\geq (1/2+\eta)n$.
Let $S$ be a minimum vertex cover of $H_1$, let $S_X=S\cap X$, and $S_Y=S\cap Y$ and note that we must have $|S|<(1/2+\eta)n$ or else we have the desired matching in $H_1$ by K\"onig's theorem.  Let $X_1'=X_1\setminus S_X$ and $Y_1'=Y_1\setminus S_Y$ and note that all edges in $[X_1', Y_1'\cup Y_2]$ and $[Y_1', X_1'\cup X_2]$ are red.  Set $x_1'=|X_1'|$ and $y_1'=|Y_1'|$.  By the supposition on $x_1$ and $x_2$ and by the bound on $|S|$ we have 
\begin{equation}\label{X1Y1}
x_1'+y_1'=x_1+y_1-|S|>(1/2+\eta)n ~\text{ and }~ x_1', y_1'>0
\end{equation}

For all $u,u'\in X_1'$, we have 
\begin{equation*}
|(N_R(u)\cap N_R(u'))\cap (Y\setminus S_Y)|\geq 2((3/4+\eta)n-|S_Y|)-(n-|S_Y|)=(1/2+2\eta)n-|S_Y|>0
\end{equation*}
and for all $v,v'\in Y_2'$, we have
\begin{equation*}
|(N_R(v)\cap N_R(v'))\cap (X\setminus S_X)|\geq 2((3/4+\eta)n-|S_X|)-(n-|S_X|)=(1/2+2\eta)n-|S_Y|>0
\end{equation*}
and thus there is a red component covering $X_1'$ and a red component covering $Y_1'$.  Also, by \eqref{X1Y1} we have, say $x_1'\geq \frac{1}{2}(|H_1|-|S|)>n/4$.  Thus every vertex in $Y_1'$ has a red neighbor in $X_1'$ and thus there is a single red component $H_2$ covering $X_1'\cup Y_1'$.

Let $T$ be a minimum vertex cover of $H_2$, let $T_X=T\cap X$, and $T_Y=T\cap Y$, and again, by K\"onig's theorem, we may assume $|T|<(1/2+\eta)n$.  By \eqref{X1Y1}, $|T|<x_1'+y_1'$ and thus there exists some vertex $u\in (X_1'\cup Y_1')\setminus (S\cup T)$, say $u\in X_1'\setminus (S_X\cup T_X)$.  Then we must have $N(u)\subseteq S_Y\cup T_Y$ which implies $|S_Y|+|T_Y|\geq (3/4+\eta)n$, which in turn  implies 
\begin{equation}\label{sXtX}
|S_X|+|T_X|=|S|+|T|-(|S_Y|+|T_Y|)\leq 2(1/2+\eta)n-(3/4+\eta)n=(1/4+\eta)n.  
\end{equation}

If there exists $v\in Y\setminus (S_Y\cup T_Y)$, then we have $N(v)\subseteq S_X\cup T_X\cup X_2$, but by \eqref{sXtX}, we then have 
$$(3/4+\eta)n\leq |S_X|+|T_X|+x_2\leq (1/4+\eta)n+(1/2-\eta)n= 3n/4,$$ a contradiction.  So we must have $Y\subseteq S_Y\cup T_Y$ and thus $|S_X|+|T_X|<2\eta n$.  Set $T_Y'=T_Y\setminus S_Y$.  Note that the only red edges from $X_1'$ to $S_Y$ must be incident with $T_X$ and the only blue edges from $X_1'$ to $T_Y'$ must be incident with $S_X$.
So we have $x_1'\geq x_1-|S_X|-|T_X|\geq (1/2-\eta)n$, $|S_Y|, |T_Y'|\geq (1/2-\eta)n$, 
$$e_R(S_Y, X_1')\leq |S_Y||T_X|\leq 2\eta n^2,$$ and 
$$e_B(T_Y', X_1')\leq |T_Y'\cap Y_1||S_X|\leq 2\eta n^2,$$
thus the coloring is $2\eta$-extremal as witnessed by $X_1', S_Y, T_Y'$.  

\end{proof}

\section{Regularity: from connected matchings to cycles}\label{sec:reg}

In this section, we introduce the now standard machinery which allows us to reduce the problem of find a long monochromatic cycle to the problem of finding a large monochromatic connected matching.

Given a graph $G$ and disjoint sets $X,Y\subseteq V(G)$, define $d(X,Y)=\frac{e(X,Y)}{|X||Y|}$.  Given $\ep\geq 0$, say that a bipartite graph with parts $X,Y$ is $\ep$-\emph{regular} if $|d(X',Y')-d(X,Y)|\leq \ep$ for all $X'\subseteq X$ and $Y'\subseteq Y$ with $|X'|> \ep |X|$ and $|Y'|>\ep |Y|$.    

Below is the standard degree form for the $2$-colored regularity lemma (see \cite{KS}) in which we begin with an initial bipartition of the vertex set.  We call $\{E_1, E_2\}$ a $2$-multicoloring of $G$ if $E_1\cup E_2=E(G)$ (i.e. we allow for $E_1\cap E_2\neq \emptyset$).

\begin{lemma}[2-colored regularity lemma -- bipartite degree form]\label{2colordegreeform}
For all $0< \ep\leq 1$ and positive integers $m$, there exists an $M = M(\ep,
m)$ such that for all 2-colored balanced $X,Y$-bipartite graphs $G$ on $2n\geq M$ vertices and all
$d\in [0,1]$, there exists an integer $k$, a partition $\{U_0,
U_1,\dots, U_k\}$ of $X$ and a partition $\{V_0,
V_1,\dots, V_k\}$ of $Y$, and a subgraph $G' \subseteq G$ with the following properties:
\begin{enumerate}
\item $|U_0|=|V_0| \le \ep n$
\item $m \le k \le M$ and $|V_1|=\dots=|V_k|=|U_1|=\dots=|U_k|$,
\item $d_{G'}(v)>d_G(v)-(2d+\ep)n$ for all $v\in V(G)$,
\item for all $1 \le i \le j \le k$, the pair $(U_i,V_j)$ is $\ep$-regular in $G_R'$ with a density either 0 or greater than $d$ and $\ep$-regular in $G_B'$ with a density either 0 or greater than $d$, where $E(G') = E(G_R') \cup E(G_B')$ is the induced 2-coloring of $G'$.
\end{enumerate}
\end{lemma}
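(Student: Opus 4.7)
The plan is to derive the degree form from the standard (non-degree-form) 2-colored bipartite regularity lemma via the usual cleanup procedure.

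First, apply the simultaneous $\ep'$-regular partition lemma for balanced bipartite graphs with an auxiliary parameter $\ep'\ll \ep$ (we will take $\ep'=\ep^2/200$) and the same $m$, obtaining partitions $X=U_0'\cup U_1'\cup \dots\cup U_k'$ and $Y=V_0'\cup V_1'\cup \dots\cup V_k'$ with $|U_0'|,|V_0'|\leq \ep' n$, $|U_i'|=|V_j'|=L$ for all $i,j\in[k]$, and all but at most $\ep' k^2$ pairs $(U_i',V_j')$ simultaneously $\ep'$-regular in both $G_R$ and $G_B$. Since $|X|=|Y|=n$, automatically $|U_0'|=|V_0'|$.

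Next, clean up in two stages. In the first stage, call a good cluster \emph{bad} if it lies in more than $\sqrt{\ep'}k$ pairs that fail to be $\ep'$-regular in some color; by double counting, at most $4\sqrt{\ep'}k$ clusters on each side are bad. Move every bad cluster to the exceptional set, then move a few additional good clusters from whichever side is larger so as to restore $|U_0|=|V_0|$. In the second stage, within each remaining good cluster $U_i$, call $v\in U_i$ \emph{atypical} if there are more than $\sqrt{\ep'}k$ indices $j$ for which some color $c$ has $(U_i,V_j)$ $\ep'$-regular in color $c$ but $|d_c(v,V_j)-d_c(U_i,V_j)L|>\ep'L$. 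Regularity allows at most $\ep'L$ vertices to violate the pair's density per pair, so at most $2\sqrt{\ep'}L$ vertices in $U_i$ are atypical; move these into the exceptional set and rebalance. Choosing $\ep'$ sufficiently small in terms of $\ep$ ensures $|U_0|=|V_0|\leq \ep n$.

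Define $G'$ by deleting from $G$ every edge with an endpoint in $U_0\cup V_0$, every edge in a pair that is not $\ep'$-regular in its own color, and every $c$-colored edge in a pair with $c$-density less than $d$. For a surviving vertex $v\in U_i$ and color $c$, the degree loss in color $c$ is at most $|V_0|$ (from the exceptional set), plus at most $\sqrt{\ep'}kL$ (from irregular pairs, using that $U_i$ is not bad), plus the low-density contribution. Non-atypicality of $v$ combined with $\ep'$-regularity gives $d_c(v,V_j)<(d+\ep')L$ on all but at most $\sqrt{\ep'}k$ low-density pairs, so the low-density contribution is at most $(d+\ep')kL+\sqrt{\ep'}kL\leq (d+2\sqrt{\ep'})n$. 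Summing over both colors yields $d_G(v)-d_{G'}(v)<(2d+\ep)n$ provided $\ep'\leq \ep^2/200$. Properties 1, 2, and 4 follow directly from the construction: removing at most a $2\sqrt{\ep'}$-fraction of vertices on each side preserves $\ep'$-regularity up to $\ep$-regularity, and $M=M(\ep,m)$ is inherited from the standard regularity lemma with parameters $(\ep',m)$.

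The one mildly delicate point is maintaining the balance $|U_0|=|V_0|$ while accommodating bad clusters and atypical vertices from both colors simultaneously. Since each cleanup step only moves an $O(\sqrt{\ep'})$-fraction of vertices from each side, we can always balance by transferring a matching amount from the larger side into the exceptional set, at the cost of a further $O(\sqrt{\ep'})n$ growth; this is absorbed by taking $\ep'\ll\ep$.
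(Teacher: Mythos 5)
The paper does not prove this lemma; it is stated as the ``standard degree form'' with a citation to Koml\'os--Simonovits, so there is no in-paper proof to compare against. Your derivation from the simultaneous two-color bipartite regularity lemma via the usual cleanup (discard clusters in too many irregular pairs, discard atypical vertices, rebalance, delete edges to $U_0\cup V_0$, in irregular pairs, and in low-density pairs, then bound the per-vertex loss) is exactly the standard route, and the degree-loss accounting comes out right with $\ep'\ll\ep^2$. Two routine points you should tighten. First, after evicting atypical vertices the clusters $U_1,\dots,U_k,V_1,\dots,V_k$ no longer all have the same cardinality; you must trim each surviving cluster down to a common size (throwing the surplus, at most $O(\sqrt{\ep'})L$ per cluster, into the exceptional sets), or property (ii) fails. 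Second, the conclusion asserts density \emph{strictly greater than} $d$ in each surviving pair, but shrinking an $\ep'$-regular pair by an $O(\sqrt{\ep'})$-fraction can move the density down by $O(\ep')$, so a pair with original density exactly $d$ could end up at or below $d$; the standard fix is to use a threshold slightly above $d$ (say $d+2\sqrt{\ep'}$) when deciding which edges to delete, or equivalently to compute the density threshold with respect to the final (trimmed) clusters. Neither issue is more than a bookkeeping adjustment, and with them your argument is a complete and correct proof of the lemma.
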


\begin{definition}[$(\ep,d)$-reduced graph]\label{def:reduced}
Given an $X,Y$-bipartite graph $G$ and partitions $\{U_0, U_1,\dots, U_k\}$ of $X$ and $\{V_0,
V_1,\dots, V_k\}$ of $Y$ satisfying conditions (i)-(v) of Lemma \ref{2colordegreeform}, we define the $(\ep, d)$-reduced graph of $G$ to be the bipartite graph $\Gamma$ on vertex set $\{U_1, \dots, U_k\} \cup \{V_1, \dots, V_k\}$ such that $U_iV_j$ is an edge of $\Gamma$ if $G'[U_i, V_j]$ has density at least $2d$.  For each $U_iV_j\in E(\Gamma)$, we assign red if $G'_R[U_i, V_j]$ has density at least $d$, and blue if $G'_B[U_i, V_j]$ has density at least $d$. 
\end{definition}

The following is a well known consequence of the regularity lemma (see Proposition 42 in \cite{KO}).

\begin{lemma}\label{reduceddegree}
Let $0<2\ep\leq d\leq c/2$ and let $G$ be a graph on $n$ vertices with $\delta(G)\geq cn$. If $\Gamma$ is a $(\ep, d)$-reduced graph of $G$ obtained by applying Lemma \ref{2colordegreeform}, then $\delta(\Gamma)\geq (c-3d)k$.
\end{lemma}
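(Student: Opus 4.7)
The plan is a routine double-counting argument, standard in the reduction of a regular partition to a reduced graph. Fix any cluster, say $U_i\subseteq X$ (the case $V_i\subseteq Y$ is symmetric), and estimate the number of edges of $G'$ between $U_i$ and $Y$ in two different ways. On the one hand, condition (iii) of Lemma \ref{2colordegreeform} gives $d_{G'}(v)>d_G(v)-(2d+\ep)n\geq(c-2d-\ep)n$ for every $v\in U_i$, and since $|V_0|\leq\ep n$ we can subtract off the edges from $U_i$ to $V_0$ to get
\[
e_{G'}(U_i,\,V_1\cup\cdots\cup V_k)\;>\;|U_i|(c-2d-2\ep)n.
\]

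On the other hand, let $t=d_\Gamma(U_i)$ denote the quantity we wish to bound from below. For each $j\in[k]$ with $U_iV_j\notin E(\Gamma)$, the definition of the reduced graph forces the density of $G'[U_i,V_j]$ to be less than $2d$, contributing fewer than $2d|U_i||V_j|$ edges; for each of the $t$ neighbors of $U_i$ in $\Gamma$ we use the trivial bound $|U_i||V_j|$. Since $|V_1|=\cdots=|V_k|$, combining the two bounds yields
\[
|U_i|(c-2d-2\ep)n\;<\;|U_i||V_1|\bigl(t+2d(k-t)\bigr)\;=\;|U_i||V_1|\bigl(t(1-2d)+2dk\bigr).
\]

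Dividing by $|U_i|$, using $k|V_1|\leq n$ on the $2dk|V_1|$ term and $|V_1|\leq n/k$ on the leading term, we arrive at $t(1-2d)>(c-4d-2\ep)k$. The hypotheses $d\leq c/2$ and $2\ep\leq d$ then absorb the error terms to give $t\geq (c-3d)k$, which is the desired conclusion. There is no real obstacle here: the combinatorial content lies entirely in the double count, and the only thing to be careful about is that the factor $2d$ (rather than $d$) in property (iii) and in the definition of $\Gamma$ arises from the multi-coloring, since one may lose up to $dn$ red plus $dn$ blue edges per vertex when passing from $G$ to $G'$.
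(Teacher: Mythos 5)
Your setup and double count follow the standard argument (the paper gives no proof of this lemma, just cites Proposition~42 of \cite{KO}), but your final sentence contains a genuine gap. From your inequality
\[
t(1-2d) > (c-4d-2\ep)k
\]
(where $t=d_\Gamma(U_i)$), the target $t\geq (c-3d)k$ would require $\frac{c-4d-2\ep}{1-2d}\geq c-3d$, which rearranges to $d(2c-1-6d)\geq 2\ep$; even with $\ep$ arbitrarily small this needs $c\geq \frac{1}{2}+3d$, and that is not implied by $d\leq c/2$. Concretely, $c=0.6$, $d=0.1$, $\ep=0.001$ satisfies all the hypotheses, your bound gives only $t>0.248k$, but the target is $t\geq 0.3k$. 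The clean consequence of your inequality, after applying $2\ep\leq d$, is $\delta(\Gamma)>(c-4d-2\ep)k\geq(c-5d)k$, not $(c-3d)k$.

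The culprit is precisely the factor of $2$ you flag in your last remark: because Lemma~\ref{2colordegreeform} loses $(2d+\ep)n$ per vertex and the density threshold in Definition~\ref{def:reduced} is $2d$, the role played by ``$d$'' in the usual single-color version of this consequence is here played by $2d$, so the honest bound is closer to $(c-5d)k$ or $(c-6d)k$; the constants in the cited K\"uhn--Osthus proposition do not transfer verbatim to this 2-colored setup. The slack is harmless for the paper, since $d$ is later taken arbitrarily small, but as written your final step asserts an implication that fails. You should either derive the weaker $(c-5d)k$ bound and note it suffices downstream, or observe that the coefficient $3d$ in the statement should really be a larger multiple of $d$ given the conventions of Lemma~\ref{2colordegreeform}.
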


Finally we state the lemma which allows us to turn the connected matching in the reduced graph into the cycle in the original graph. Some variant of this lemma, first introduced by \L uczak \cite{Lu}, has been utilized by many authors, in particular \cite{GS}, \cite{BLSSW}, and \cite{W}.  See Lemma 2.2 in \cite{BLSSW} for the variant of \L uczak's lemma which is used to build the nearly spanning paths in each pair (in place of the much stronger blow-up lemma).  

\begin{lemma}\label{match_cycle}
Let $0<\ep\ll d$ and let $\Gamma$ be an $(\ep, d)$-reduced graph of a $2$-colored graph $G$.  Assume that there is a monochromatic connected matching $M$ saturating at least $c|V(\Gamma)|$ vertices of $\Gamma$, for some positive constant $c$.  If $U\subseteq V(G)$ is the set of vertices spanned by the clusters in $M$, then there is a monochromatic cycle in $G$ covering at least $c(1-6\sqrt{\ep})n$ vertices of $U$.  
\end{lemma}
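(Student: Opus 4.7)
The plan is the standard Łuczak-type connecting argument, following variants of \cite{Lu}, \cite{GS}, \cite{BLSSW}, and \cite{W}. Without loss of generality, assume $M$ is red and lies in a single red component $C$ of $\Gamma$, with matching edges $A_1B_1, \ldots, A_tB_t$, so that $U = \bigcup_{i=1}^t (A_i \cup B_i)$ and $2t \geq c|V(\Gamma)|$; write $L$ for the common cluster size.

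First, using the connectivity of $C$, I would find a closed walk $W$ in $C$ visiting every edge of $M$. After cyclically relabeling, I may assume $W$ traverses the matching edges in the order $A_1B_1, A_2B_2, \ldots, A_tB_t, A_1B_1$, and between consecutive matching edges $W$ consists of a short sequence of red edges of $\Gamma$, i.e.\ a short sequence of $\ep$-regular pairs in $G_R'$ each of density at least $d$.

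Next, I would reserve inside each cluster visited by $W$ a small ``buffer'' of size at most $\sqrt{\ep}L$, intended solely for the connecting paths. Inside each matching pair $(A_i, B_i)$, with its buffer removed, I would apply the standard almost-spanning path lemma for $\ep$-regular pairs (Lemma 2.2 of \cite{BLSSW}) to construct a red path $P_i$ through $(A_i, B_i)$ covering all but at most a $3\sqrt{\ep}$-fraction of each side, whose two endpoints can be prescribed to lie in a large ``typical'' subset of the buffer. Then, for each consecutive pair $B_i, A_{i+1}$, I would follow the short red walk between them in $W$ step by step: at each step, $\ep$-regularity of the corresponding pair guarantees that all but an $\ep$-fraction of vertices in the current cluster have at least $(d-\ep)L$ red neighbors in the next, so a short red path using $O(1)$ buffer vertices per intermediate cluster extends greedily from the endpoint of $P_i$ to the prescribed endpoint of $P_{i+1}$.

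Concatenating the $P_i$'s through these short connectors produces a single red cycle whose number of vertices in $U$ is at least $(1 - 3\sqrt{\ep})|U|$. Since $|U| = 2tL \geq c|V(\Gamma)|\, L = 2ckL \geq 2c(1-\ep)n$ (using $kL \geq (1-\ep)n$ from Lemma~\ref{2colordegreeform}), this comfortably exceeds $c(1-6\sqrt{\ep})n$. The main technical burden is the bookkeeping needed to simultaneously (i) prescribe the endpoints of each $P_i$, (ii) keep the short connectors disjoint from the $P_i$'s and from each other, and (iii) absorb all vertices used in the connections into the buffer loss; this is precisely what the path lemma of \cite{BLSSW} is designed to handle, so the remaining work is to assemble the pieces.
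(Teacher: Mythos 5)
The paper does not actually prove Lemma~\ref{match_cycle}: it states it and cites the standard sources (\L uczak \cite{Lu}, and Lemma 2.2 of \cite{BLSSW} for building the almost-spanning paths in each regular pair). Your outline---take a closed walk in the red component of $\Gamma$ through all matching edges, reserve small buffers in each cluster, build a nearly-spanning path in each regular matching pair with prescribed endpoints via the \cite{BLSSW} path lemma, and greedily splice short connecting paths along the walk---is precisely the standard argument that the paper is citing, so you have correctly reconstructed the intended proof at the same level of detail as the literature requires.

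One small bookkeeping remark worth noting. You compute $|U| = 2tL \geq 2c(1-\ep)n$ and conclude that $(1-3\sqrt{\ep})|U|$ ``comfortably exceeds'' $c(1-6\sqrt{\ep})n$; indeed it exceeds it by nearly a factor of $2$. This is not an error in your argument but rather reflects the convention implicit in the lemma statement: the lemma is phrased for a $2$-colored graph $G$ on $n$ vertices with $|V(\Gamma)|=k$ and $kL\approx n$, in which case a matching saturating a $c$-fraction of $\Gamma$ gives $|U|\approx cn$, matching the stated bound exactly. When the lemma is invoked in the bipartite setting of this paper ($G$ on $2n$ vertices, $\Gamma$ on $2k$ clusters), the conclusion should read $c(1-6\sqrt\ep)\cdot 2n$, which is what your computation shows and what is actually needed in the proof of Theorem~\ref{thm2}. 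So your numbers are consistent with the correct reading, and the apparent slack is a statement-convention artifact rather than a gap.
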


\section{Monochromatic circumference}\label{sec:circ}

For a fixed positive integer $r$, the \emph{monochromatic circumference} of a graph $G$ is largest value of $t$ such that in every $r$-coloring of the edges of $G$, there exists a monochromatic cycle of length at least $t$.  We now prove Theorem \ref{thm2} which provides an asymptotically sharp bound on the monochromatic circumference of a balanced bipartite graph with a given minimum degree in the case $r=2$. 

\begin{proof}[Proof of Theorem \ref{thm2}]
As in Section 1, let
\[
f(\delta)=\begin{cases}
      \delta /2,& \for 0\leq \delta \leq 2/3, \\
      2\delta-1,& \for 2/3\leq \delta\leq 3/4, \\
      1/2,& \for 3/4\leq \delta\leq 1.
\end{cases}
\]
Let $0 < \ep \ll d \ll \eta$. Apply Lemma \ref{2colordegreeform} to $G$ to get a balanced bipartite $(\ep,d)$-reduced graph $\Gamma$ on $2k$ vertices with minimum degree at least $(\delta-3d)k$ and then apply Lemma \ref{con_match} to $\Gamma$ to get a connected matching of size at least $f(\delta-3d) k\geq (f(\delta)-6d)k$.  Now apply Lemma \ref{match_cycle} to get a cycle of length at least $(1-6\sqrt{\ep})(f(\delta)-6d)k\geq (f(\delta)-\eta) n$.
\end{proof}

\section{Stability}\label{sec:stability}

In this section we prove a lemma which shows that if we have an extremal coloring of the reduced graph, then we have an extremal coloring of the original graph (with a slightly weaker parameter).  We use this together with Theorem \ref{stable_match} to prove Theorem \ref{thm1}. 

\begin{lemma}\label{reducedextremal}
Let $0< 2\ep\leq d$ and $4\ep+d\leq \eta \leq 1/4$ and let $n_0$ be sufficiently large.  Let $G$ be a balanced bipartite graph on $2n\geq 2n_0$ vertices with 2-edge-coloring $c$, and let $\Gamma$ be a 2-colored $(\ep, d)$-reduced graph of $G$, with edge coloring $c'$, after an application of Lemma \ref{2colordegreeform}.  If $c'$ is an $\eta$-extremal coloring of $\Gamma$, then $c$ is an $2\eta$-extremal coloring of $G$.
\end{lemma}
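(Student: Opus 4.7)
The plan is to lift the extremal partition of $V(\Gamma)$ back to $V(G)$ by taking the preimage under the cluster map, and then verify the size and edge-count conditions of $2\eta$-extremality by carefully accounting for the losses introduced by the regularity reduction. Write $m$ for the common cluster size, so that $km = n - |U_0| = n - |V_0| \geq (1-\ep)n$. Let $X' \subseteq \{U_1,\dots,U_k\}$ and $\{Y_1, Y_2\}$ be the partition of $\{V_1,\dots,V_k\}$ witnessing $\eta$-extremality of $c'$ on $\Gamma$. Define
\[
X'' = \bigcup_{U_i \in X'} U_i, \qquad Y_1'' = \bigcup_{V_j \in Y_1} V_j, \qquad Y_2'' = Y \setminus Y_1'',
\]
so that $V_0 \subseteq Y_2''$ and $\{Y_1'', Y_2''\}$ partitions $Y$.

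The size lower bound $|X''|, |Y_1''|, |Y_2''| \geq (1/2-2\eta)n$ follows immediately from $|X'|, |Y_1|, |Y_2| \geq (1/2-\eta)k$ and $km \geq (1-\ep)n$, since the product $(1/2-\eta)(1-\ep)n$ exceeds $(1/2-2\eta)n$ as the hypotheses $2\ep \leq d$ and $4\ep+d \leq \eta$ imply $\ep \leq \eta/4$. For the edge condition I would split $e_R(X'', Y_1'')$ into red edges inside $G'$ and red edges in $G \setminus G'$. Every red edge of $G'$ inside $X'' \times Y_1''$ lies in some pair $(U_i, V_j)$ with $U_i \in X'$ and $V_j \in Y_1$; by property (iv) of Lemma \ref{2colordegreeform} the red density on such a pair is either $0$ or strictly greater than $d$, so by Definition \ref{def:reduced} any pair contributing at least one red edge of $G'$ is itself red in $\Gamma$. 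There are at most $e_R(X', Y_1) \leq \eta k^2$ such pairs, each contributing $\leq m^2$ red edges, for a total of at most $\eta (km)^2 \leq \eta n^2$. Red edges in $G \setminus G'$ incident to $X''$ are controlled by property (iii): each $v \in X''$ has $d_G(v)-d_{G'}(v) < (2d+\ep)n$, yielding at most $|X''|(2d+\ep)n \leq (2d+\ep)n^2$. Combining, $e_R(X'', Y_1'') \leq (\eta + 2d + \ep)n^2$, which the parameter conditions reduce to $\leq 2\eta n^2$.

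The blue bound follows the same template with two small changes: the blue edges of $G'$ inside $X'' \times Y_2''$ come from blue pairs of $\Gamma$ between $X'$ and $Y_2$ (of which there are at most $\eta k^2$), and one must also add the trivial bound $|X''|\cdot|V_0| \leq \ep n^2$ for edges from $X''$ to $V_0 \subseteq Y_2''$. The main technical point is precisely this final piece of bookkeeping: the three distinct error terms---the exceptional set $U_0 \cup V_0$, the per-vertex deletion from the regularity reduction, and the edges inside the at most $\eta k^2$ ``bad-colored'' regular pairs---all have to be absorbed into the slack between $\eta$-extremal on $\Gamma$ and $2\eta$-extremal on $G$, and the hypotheses $2\ep \leq d$ and $4\ep + d \leq \eta \leq 1/4$ are calibrated exactly for this absorption.
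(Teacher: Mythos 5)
There is a genuine gap in the edge-count step, and independently the final arithmetic does not close under the stated hypotheses. The claim that ``any pair contributing at least one red edge of $G'$ is itself red in $\Gamma$'' is false under Definition~\ref{def:reduced}: a pair $U_iV_j$ receives a color only if it is first an \emph{edge} of $\Gamma$, which requires the density of $G'[U_i,V_j]$ to be at least $2d$. A pair whose $G'_R$-density lies in $(d,2d)$ while its $G'_B$-density is $0$ has total $G'$-density below $2d$, so it is not an edge of $\Gamma$ (hence not red), yet it can contribute up to $2dm^2$ red edges of $G'$. These borderline pairs are unaccounted for in your bound of $\eta(km)^2$; a separate term of order $d\cdot n^2$ is needed.

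Separately, even granting that claim, your bound $e_R(X'',Y_1'')\leq(\eta+2d+\ep)n^2$ reduces to $\leq 2\eta n^2$ only if $2d+\ep\leq\eta$, which does not follow from $2\ep\leq d$ and $4\ep+d\leq\eta$ (take $\ep=10^{-3}$, $d=0.1$, $\eta=0.2$: then $2d+\ep=0.201>\eta$). The paper's proof avoids both problems by working pair-by-pair in $G$ itself rather than splitting off $G\setminus G'$ via property (iii): each pair $(U_i,V_j)$ with $U_i\in\cU'$, $V_j\in\cV_1$ is charged at most $m^2$ red edges if it is red in $\Gamma$ and at most $dm^2$ otherwise, $V_0$ is placed into $Y_1$ at a cost of $\ep n|X'|$, and the total is compared against $|X'||Y_1|$ rather than $n^2$. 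This yields the requirement $\frac{\ep}{1/2-\eta}+\eta+d\leq 2\eta$, which for $\eta\leq 1/4$ is exactly the hypothesis $4\ep+d\leq\eta$. Your choice to route the deleted-edge contribution through property (iii), at a cost of $(2d+\ep)n$ per vertex, doubles the coefficient on $d$ beyond what the stated hypotheses can absorb, and your placement of $V_0$ on the blue side shifts the $\ep$-term to the wrong place for that bookkeeping.
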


\begin{proof}
Let $\cU=\{U_1, \dots, U_k\}$, $\cV=\{V_1, \dots, V_k\}$ be the bipartition of $\Gamma$ and let $m:=|U_1|=\dots=|U_k|=|V_1|=\dots=|V_k|$.  Suppose we have $\cU'\subseteq \cU$ and a partition $\cV_1, \cV_2$ of $\cV$ such that  such that $|\cU'|, |\cV_1|, |\cV_2|\geq (1/2-\eta)k$, $e_R(\cU', \cV_1)\leq \eta |\cU'||\cV_1|$, and $e_B(\cU',\cV_2)\leq \eta |\cU'||\cV_2|$.  Let $X'=\bigcup_{U\in \cU'}U$, $Y_1=V_0\cup \bigcup_{V\in \cV_1} V$, and $Y_2=\bigcup_{V\in \cV_2} V$.  Since $|V_0|\leq \ep n$, we have 
$$e_R(X', Y_1)\leq \ep n|X'|+\eta |\cU'||\cV_1|m^2+d|\cU'||\cV_1|m^2\leq\left(\frac{\ep}{1/2-\eta}+\eta+d\right) |X'||Y_1|\leq 2\eta|X'||Y_1|$$ and 
$$e_B(X', Y_2)\leq \eta |\cU'||\cV_2|m^2+d|\cU'||\cV_2|m^2=(\eta+d)|X'||Y_2|\leq 2\eta|X'||Y_2|.$$  Furthermore, $|X'|\geq (1-\ep)(n/k)|\cU'|\geq (1-\ep)(1/2-\eta)n\geq (1/2-2\eta)n$ and $|Y_i|\geq (1-\ep)(n/k)|\cV_i|\geq (1-\ep)(1/2-\eta)n\geq (1/2-2\eta)n$ for $i\in [2]$.  So $G$ is $2\eta$-extremal.  
\end{proof}

Now we prove the main result.  We delay the proof of Proposition \ref{extremalCase} (the extremal case) to the final section.

\begin{proof}[Proof of Theorem \ref{thm1}]
Let $0\leq 64\sqrt{\eta}<\gamma\leq 1/4$, let $0 < \frac{1}{n_0}\ll \ep \ll d \ll \gamma \leq \frac{1}{4}$, and let $G$ be a graph on $n\geq n_0$ vertices with $\delta(G)\geq (3/4+\gamma)n$.  Apply Lemma \ref{2colordegreeform} to $G$ to get a balanced bipartite $(\ep,d)$-reduced graph $\Gamma$ on $2k$ vertices with minimum degree at least $(3/4+\gamma/2)k$ and then apply Theorem \ref{stable_match} to $\Gamma$.  If $\Gamma$ has a monochromatic connected matching of size at least $(1/2+2\eta)k$, then apply Lemma \ref{match_cycle} to get a cycle of length at least $(1+\eta)n$.  Otherwise, $\Gamma$ is $4\eta$-extremal, so by Lemma \ref{reducedextremal}, $G$ is $8\eta$-extremal and since $16\sqrt{8\eta}\leq 64\sqrt{\eta}\leq  \gamma$ we may apply Proposition \ref{extremalCase} to $G$ to finish the proof.
\end{proof}

\section{Extremal Case}\label{sec:extremal}

In this section we complete the proof of Theorem \ref{thm1} by showing that if $G$ has an extremal coloring, then $G$ has a monochromatic path of order $2\ceiling{n/2}$ and a monochromatic cycle of length at least $2\floor{n/2}$.  

We utilize the following two theorems to find long monochromatic paths and cycles.

\begin{theorem}[Erd\H{o}s, Gallai \cite{EG}]\label{erdos_lemma}
Let $G$ be a graph on $n$ vertices. If $e(G) > \frac{k(n-1)}{2}$, then $G$ contains a cycle of length at least $k+1$.
\end{theorem}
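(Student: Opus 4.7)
The plan is to prove the contrapositive: if the circumference of $G$ is at most $k$, then $e(G) \leq k(n-1)/2$. I would induct on $n$, with base cases $n \leq k$ handled automatically since $\binom{n}{2} \leq k(n-1)/2$ in that range.

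For the inductive step, the first move is to peel off any vertex of low degree: if there exists $v \in V(G)$ with $d(v) \leq k/2$, then $G - v$ still has circumference at most $k$, so by induction $e(G - v) \leq k(n - 2)/2$, giving $e(G) \leq k(n-2)/2 + k/2 = k(n-1)/2$, as required. We may therefore assume $\delta(G) > k/2$.

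Next, reduce to the $2$-connected case: if $G$ is disconnected or has a cut vertex, decompose $G$ into its blocks $B_1, \dots, B_t$, each of which still has circumference at most $k$. Since the block vertex counts satisfy $\sum_i |V(B_i)| \leq n + (t - 1)$ (each cut vertex gets an extra count for every additional block it lies in), summing the inductive bound over blocks yields $e(G) = \sum_i e(B_i) \leq k(n - 1)/2$.

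It remains to handle the $2$-connected case with $\delta(G) > k/2$ and $n \geq k + 1$: here I would show that $G$ already contains a cycle of length at least $k + 1$, contradicting the circumference hypothesis. Take a longest cycle $C$ of length $c$ and suppose for contradiction that $c \leq k$. Since $n > c$, we have $V(G) \setminus V(C) \neq \emptyset$, and $2$-connectivity supplies two internally-disjoint paths from an outside vertex (or component) back to $C$. One then performs the standard Dirac-style swap, replacing the shorter arc of $C$ between the two landing points by the external detour, to produce a strictly longer cycle; the minimum-degree hypothesis $\delta(G) > k/2 \geq c/2$ is precisely what guarantees the external detour is long enough to beat the discarded arc, yielding a cycle of length at least $\min\{n, 2\delta(G)\} \geq k + 1$. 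This final rotation/swap bookkeeping is the main obstacle---everything preceding it is essentially a counting exercise, but the careful manipulation of arc and path lengths in the $2$-connected case is the delicate piece of the argument.
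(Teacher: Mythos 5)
The paper cites this result to Erd\H{o}s and Gallai and does not prove it, so there is no in-paper argument to compare against; I will just assess your proposal on its own terms. Your overall framework --- contrapositive, induction on $n$, peeling off a vertex of degree at most $k/2$, decomposing into blocks, and reducing to the $2$-connected case with $\delta(G)>k/2$ and $n \geq k+1$ --- is sound and is exactly the standard skeleton of a proof of this theorem. The base case, the degree-peeling step, and the block bookkeeping (using $\sum_i(|V(B_i)|-1)\leq n-1$) are all correct.

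The gap is in the final $2$-connected step. You claim that, given a longest cycle $C$ of length $c\leq k<n$ and a vertex $v$ off $C$, $2$-connectivity gives two internally disjoint paths from $v$ to $C$, and that ``replacing the shorter arc of $C$ between the two landing points by the external detour'' produces a longer cycle, with $\delta(G)>k/2\geq c/2$ ``precisely what guarantees the external detour is long enough.'' This is not true: the minimum degree of $v$ tells you nothing about the lengths of those two paths. If $v$ has just two neighbours on $C$ and they are nearly antipodal, the detour has length $2$ while the shorter arc can have length up to $c/2$, and the swap yields a \emph{shorter} cycle, not a longer one. The simple pigeonhole version of the swap (two neighbours of $v$ consecutive on $C$) works only when \emph{all} of $v$'s neighbours lie on $C$, and there is no reason such a $v$ exists. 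What you actually need here is Dirac's 1952 circumference theorem, that a $2$-connected graph with minimum degree $d$ has a cycle of length at least $\min\{n,2d\}$; its proof goes through a longest path, the disjointness of the two neighbour-index sets of its endpoints (the Ore/Bondy crossing lemma), and a further use of $2$-connectivity to close a long cycle --- it is not a one-move arc swap. If you instead cite Dirac's theorem as a black box, your argument is complete; as written, the sketch of that step does not hold up.
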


We say that a balanced $X,Y$-bipartite graph is Hamiltonian bi-connected if for all $x\in X$ and $y\in Y$, there exists a Hamiltonian path having $x$ and $y$ as endpoints.  The following is a Chv\'atal-type theorem for a bipartite graph to be Hamiltonian bi-connected.

\begin{theorem}[see Berge {\cite[Chapter 10, Theorem 14]{Ber}}]\label{berge_lemma}
Let $G=(U,V,E)$ be a bipartite graph on $2m\geq 4$ vertices with vertices in $U=\{u_1, \dots, u_m\}$ and $V=\{v_1, \dots, v_m\}$ such that $d(u_1) \leq \cdots \leq d(u_m)$ and $d(v_1) \leq \cdots \leq d(v_m)$.  If for the smallest two indices $j$ and $k$ such that $d(u_j)\leq j+1$ and $d(v_k)\leq k+1$, we have $$d(u_j)+d(v_k)\geq m+2,$$
then $G$ is Hamiltonian bi-connected.
\end{theorem}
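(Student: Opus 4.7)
The plan is to prove the theorem via a bipartite adaptation of the Bondy--Chv\'atal closure method, followed by the observation that the closure under the given hypothesis is complete bipartite. I proceed by contradiction, assuming $G$ satisfies the hypothesis but is not Hamiltonian bi-connected.

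First, I would define the \emph{Hamiltonian bi-connected closure} $C(G)$ of $G$ by iteratively joining any non-adjacent pair $u \in U$, $v \in V$ with $d(u) + d(v) \geq m+2$, until no such pair remains. The key step is the closure lemma: $G$ is Hamiltonian bi-connected if and only if $C(G)$ is. By induction on added edges, it suffices to show that if $uv \notin E(G)$ with $d_G(u) + d_G(v) \geq m+2$ and $G + uv$ is Hamiltonian bi-connected, then so is $G$. Fix arbitrary endpoints $u^* \in U$, $v^* \in V$, and take a Hamiltonian $u^*,v^*$-path $P$ in $G + uv$; we may assume $P$ uses the edge $uv$. Labeling $P = w_0 w_1 \cdots w_{2m-1}$ with $w_0 = u^*$, $w_{2m-1} = v^*$ and (say) $w_s = u$, $w_{s+1} = v$, I would mark the indices $t$ where $u w_t$ is an edge of $G$ and the indices $t$ where $v w_{t-1}$ is an edge of $G$; the degree sum $\geq m+2$ together with the bipartite parity of $P$ forces two such indices to align, and reversing the corresponding segment yields a Hamiltonian $u^*,v^*$-path in $G$ that avoids the edge $uv$.

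Second, I would show that under the hypothesis $C(G) = K_{m,m}$. Supposing not, pick a non-edge $(u_a, v_b)$ of $C(G)$ maximizing $p + q$, where $p = d_{C(G)}(u_a)$ and $q = d_{C(G)}(v_b)$; by saturation $p + q \leq m+1$. Every vertex in $U$ non-adjacent to $v_b$ has $C(G)$-degree at most $p$ (by maximality of the pair), so at least $m-q$ vertices in $U$ have $C(G)$-degree $\leq p$, giving $d_{C(G)}(u_{m-q}) \leq p \leq (m-q)+1$. Since the sorted $G$-degree sequence is pointwise dominated by the sorted $C(G)$-degree sequence, the smallest $j^*$ with $d_G(u_{j^*}) \leq j^*+1$ satisfies $j^* \leq m-q$, and monotonicity yields $d_G(u_{j^*}) \leq d_{C(G)}(u_{j^*}) \leq p$. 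The symmetric argument gives $k^* \leq m-p$ and $d_G(v_{k^*}) \leq q$. Therefore
$$d_G(u_{j^*}) + d_G(v_{k^*}) \leq p + q \leq m + 1 < m + 2,$$
contradicting the hypothesis. Since $K_{m,m}$ is trivially Hamiltonian bi-connected for $m \geq 2$, combining this with the closure lemma completes the proof.

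The main obstacle will be the closure lemma in Step 1: the rotation-swap argument must preserve \emph{both} prescribed endpoints $u^*$ and $v^*$ while eliminating the use of $uv$, rather than merely producing some Hamiltonian path as in the standard P\'osa rotation. The bipartite structure is helpful (parity of indices along $P$ aligns the marked positions of $u$-neighbors and $v$-neighbors on complementary sides of the bipartition) but the pigeonhole accounting has to be set up carefully so that the $+2$ slack in $m+2$ covers both the added edge $uv$ and the two endpoints $u^*, v^*$ whose incidences cannot be reused. Step 2 is essentially degree-sequence bookkeeping in the spirit of Chv\'atal's original proof, and the lift of the hypothesis from $G$ to $C(G)$ is handled transparently by the pointwise domination of the two sorted sequences.
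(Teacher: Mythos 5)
The paper does not prove this theorem; it is cited verbatim from Berge's book, so there is no in-paper argument to compare against. Your closure strategy (bipartite Bondy--Chv\'atal closure plus a Chv\'atal-style degree-sequence argument showing the closure is $K_{m,m}$) is a legitimate and natural way to prove it, and Step~2 is essentially correct: at least $m-q$ vertices of $U$ have $C(G)$-degree at most $p\le (m-q)+1$, hence the $(m-q)$-th smallest $G$-degree on $U$ is at most $p$, so $j^*\le m-q$ and $d_G(u_{j^*})\le p$; symmetrically $d_G(v_{k^*})\le q$, and $p+q\le m+1<m+2$ gives the contradiction. (One small correction: the chain you wrote, $d_G(u_{j^*})\le d_{C(G)}(u_{j^*})\le p$, is not justified, since $u_{j^*}$ is indexed by its \emph{$G$}-degree and its $C(G)$-degree need not be small; you want $d_G(u_{j^*})\le d_G(u_{m-q})\le$ [the $(m-q)$-th smallest $C(G)$-degree on $U$] $\le p$, using pointwise domination of the two \emph{sorted} sequences, which is what you invoked anyway.)

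The one place the sketch would actually fail as written is the alignment rule in Step~1. With $P=w_0\cdots w_{2m-1}$, $w_0=u^*$, $w_{2m-1}=v^*$, $w_s=u$, $w_{s+1}=v$, you mark $t$ with $uw_t\in E(G)$ and $t$ with $vw_{t-1}\in E(G)$; a doubly marked $t$ then gives $w_{t-1}\sim v$ and $w_t\sim u$. That pattern does \emph{not} yield a segment reversal producing a $u^*,v^*$-Hamiltonian path in $G$: whether $t<s$ or $t>s+1$, the attempted reroute dead-ends. The alignment you need is the opposite orientation, $uw_t\in E(G)$ and $vw_{t+1}\in E(G)$. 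Then for $t<s$ the path $u^*,\dots,w_t,\,u=w_s,w_{s-1},\dots,w_{t+1},\,v=w_{s+1},w_{s+2},\dots,v^*$ works, and for $t>s+1$ the path $u^*,\dots,u=w_s,\,w_t,w_{t-1},\dots,v=w_{s+1},\,w_{t+1},\dots,v^*$ works. With this corrected rule the pigeonhole comes out exactly as you anticipated: the $u$-marks lie in the $m$ odd positions (possibly including $2m-1$, i.e.\ $v^*$, which can never pair), the $v$-marks lie in the $m-1$ odd positions $\le 2m-3$ (and $u^*=w_0$ is never a shift target), so the two endpoints cost at most $2$; the positions $s\pm1$ are excluded automatically because $uv\notin E(G)$; hence $d_G(u)+d_G(v)\ge m+2$ forces at least one valid double mark, and one suffices. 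So the ``$+2$'' is spent on the two endpoints, not on the added edge, and the claim that ``two'' indices must align is stronger than needed.
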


Before tackling the main extremal case, we first prove two useful lemmas.

\begin{lemma}\label{longPathProp}
Let $0 \leq 8\sqrt{\theta} < \gamma \leq \frac{1}{4}$ be real numbers and let $n$ be an integer such that $n\geq 3/\gamma$. Let $G$ be a 2-colored balanced $X,Y$-bipartite graph on $2n$ vertices with $\delta(G) \geq (\frac{3}{4} + \gamma) n$.  Given $X' \subseteq X$ and $Y' \subseteq Y$ with $ |X'| \geq (\frac{1}{2} - \gamma/8)n$ and $ |Y'| \geq (\frac{1}{2} - \gamma/8)n $, and $e_R(X', Y') \leq \theta n^2$, define 
\[X_S = \{x \in X' : d_B(x, Y') \leq |Y'|-n/2+\gamma n/2\}\] and 
\[Y_S = \{y \in Y' : d_B(y, X') \leq |X'|-n/2 + \gamma n/2\}.\] 
Then $|X_S|, |Y_S| \leq 4\theta n$ and for all $X^* \subseteq X' \setminus X_S$ and $Y^* \subseteq Y' \setminus Y_S$ such that $|X^*| = |Y^*| \geq  (\frac{1}{2} - \gamma/4) n$, $[X^*, Y^*]_B$ is Hamiltonian bi-connected.
\end{lemma}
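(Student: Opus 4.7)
The plan is to prove the two conclusions in sequence: a short averaging argument gives $|X_S|, |Y_S| \leq 4\theta n$, and then a Chv\'atal/Berge-type argument (Theorem \ref{berge_lemma}) shows that $[X^*, Y^*]_B$ is Hamiltonian bi-connected. The bulk of the work goes into the second part, where the main difficulty is that the direct blue-degree lower bound on $[X^*, Y^*]_B$ is not strong enough to apply Berge naively, so a two-regime analysis is needed.

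\textbf{Step 1: bounding $|X_S|$ and $|Y_S|$.} Unpacking the definition, any $x \in X_S$ has at least $n/2 - \gamma n/2$ non-blue elements of $Y'$ (either non-neighbors in $G$ or red neighbors). At most $n/4 - \gamma n$ of these can fail to be edges of $G$, by the minimum-degree hypothesis, so $d_R(x, Y') \geq n/4 + \gamma n /2$. Summing over $X_S$ and using $e_R(X', Y') \leq \theta n^2$ yields $|X_S| \leq 4\theta n$; $|Y_S|$ is symmetric.

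\textbf{Step 2: setting up Berge for $[X^*, Y^*]_B$.} Fix $X^* \subseteq X' \setminus X_S$ and $Y^* \subseteq Y' \setminus Y_S$ with $m := |X^*| = |Y^*| \geq (1/2 - \gamma/4)n$. Restricting $d_B(x, Y')$ to $Y^*$ loses at most $|Y'| - m$ blue neighbors, so every $x \in X^*$ satisfies $d_B(x, Y^*) > m - n/2 + \gamma n/2$; the analogous bound holds on $Y^*$. Order the vertices of $X^*$ and $Y^*$ by increasing blue degree to the opposite side, and let $j^*$ (resp.\ $k^*$) be the smallest index with $d_B(u_{j^*}, Y^*) \leq j^* + 1$ (resp.\ $d_B(v_{k^*}, X^*) \leq k^* + 1$). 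Minimality together with monotonicity of the sorted degree sequence forces $d_B(u_{j^*}, Y^*) = j^* + 1$ and $d_B(v_{k^*}, X^*) = k^* + 1$, so Berge's condition reduces to proving $j^* + k^* \geq m$.

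\textbf{Step 3: the obstacle and the two-regime argument.} The degree bound from Step 2 alone yields only $j^*, k^* \geq m - n/2 + \gamma n/2$, hence $j^* + k^* \geq 2m - n + \gamma n$, which meets the Berge threshold $m$ only when $m \geq (1 - \gamma)n$; but the hypothesis allows $m$ as small as $(1/2 - \gamma/4)n$, and this gap is the main obstacle. I would bridge it by additionally exploiting the global bound $e_R(X', Y') \leq \theta n^2$: if $d_B(u_{j^*}, Y^*) \leq j^* + 1$, then each of the $j^*$ smallest-blue-degree vertices in $X^*$ has $d(\cdot, Y^*) \geq m - n/4 + \gamma n$ (from the minimum degree together with $|Y \setminus Y^*| \leq n - m$), and therefore $d_R(\cdot, Y^*) \geq m - n/4 + \gamma n - j^* - 1$. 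Summing these lower bounds produces the quadratic inequality
\begin{equation*}
j^*\bigl(m - n/4 + \gamma n - 1 - j^*\bigr) \leq \theta n^2,
\end{equation*}
whose solution set is contained in $[0,\, c\theta n] \cup [m - n/4 + \gamma n - c\theta n,\, m]$ for an absolute constant $c$. The Step-2 bound $j^* \geq m - n/2 + \gamma n/2 \geq \gamma n/4$ combined with $\gamma > 8\sqrt{\theta}$ shows $\gamma n/4 > c\theta n$, ruling out the small regime and forcing $j^* \geq m - n/4 + \gamma n - c\theta n$; symmetrically for $k^*$. Adding these and using $m - n/2 + 2\gamma n \geq 7\gamma n/4$ together with $n \geq 3/\gamma$ and $\theta \ll \gamma$ gives $j^* + k^* \geq m$, completing the application of Theorem \ref{berge_lemma}.
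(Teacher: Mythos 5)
Your Steps 1 and 2 match the paper. Step 3 is where you diverge. The paper's proof defines a second exceptional set
\[
X_S' = \{x \in X^* : d_B(x,Y^*) \leq m - (1/4-\gamma/2)n\},
\]
bounds $|X_S'| \le 2\theta n/\gamma$ by the same red-edge count you use, and then observes (via the bound $\delta_B(X^*,Y^*)\geq \gamma n/8$, together with $\gamma > 8\sqrt\theta$) that every vertex has blue degree $\geq |X_S'|+2$, so the Berge index $i$ satisfies $i > |X_S'|$; since $X_S'$ consists exactly of the lowest-degree vertices, $x_i \notin X_S'$ and $d_B(x_i,Y^*) > m - (1/4-\gamma/2)n$, giving the Berge inequality directly. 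Your approach replaces this fixed threshold with a self-referential one: you let $j^*$ play the role of the threshold, sum red degrees over the $j^*$ lowest-blue-degree vertices, and get the quadratic $j^*(m-n/4+\gamma n-1-j^*)\leq\theta n^2$, then exclude the small root using the degree lower bound. Both routes exploit the same underlying mechanism (low blue degree forces high red degree into a small red-edge budget), so they are variations on one idea; but the paper's version cleanly decouples the exceptional-set size from the Berge index, while yours ties them together and consequently requires a somewhat more delicate root analysis. The paper's form is slightly easier to make airtight.

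Two spots in your write-up would need tightening if made rigorous. First, the claim that minimality forces $d_B(u_{j^*},Y^*)=j^*+1$ uses $d_B(u_{j^*-1},Y^*)\geq j^*+1$, hence needs $j^*\geq 2$; for the smallest allowed $n$ (where $\gamma n$ can be as small as $3$) the degree bound alone gives only $j^*\geq 1$, and you need to invoke the quadratic inequality again (or an integer-rounding argument) to exclude $j^*=1$. Second, "the Step-2 bound $j^*\geq m-n/2+\gamma n/2$" should read $j^*+1>m-n/2+\gamma n/2$; the off-by-one matters precisely in that same borderline regime. Neither of these is a fatal flaw — the estimates do close with some care — but the paper's exceptional-set formulation sidesteps both issues.
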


\begin{proof}
Note that
\begin{align*}
\delta_R(X_S, Y') &\geq \delta(G)- (n-|Y'|) - \Delta_B(X_S, Y')\\
&\geq (3/4+\gamma)n-(n-|Y'|)-(|Y'|-n/2+\gamma n/2)=n/4,
\end{align*}
so
\[ 
\theta n^2 \geq e_R(X', Y') \geq e_R(X_S, Y') \geq |X_S|n/4 ,
\]
which implies $|X_S| \leq 4\theta n$. Similarly $|Y_S| \leq 4 \theta n$.

Set $s:=|X'|-(1/2-\gamma/8)n$ and $t:=|Y'|-(1/2-\gamma/8)n$.  Set $X_L = X' \setminus X_S$ and $Y_L = Y' \setminus Y_S$. Note that
\[ |X_L| \geq |X'|-4\theta n\geq (\frac{1}{2} - \gamma/4) n ; \quad |Y_L| \geq |Y'|-4\theta n\geq (\frac{1}{2} - \gamma/4) n,\]
and
\begin{equation}\label{XLYL}
\delta_B(X_L, Y_L) \geq |Y'|-n/2+\gamma n/2-|Y_S|\geq t+\gamma n/4 ; \quad \delta_B(Y_L, X_L) \geq s + \gamma n/4.
\end{equation}


Now take any $X^* \subseteq X_L$ and $Y^* \subseteq Y_L$ with $m:= |X^*| = |Y^*| \geq (\frac{1}{2} - \gamma/4) n$. 
Define
\[ X_S' = \{x \in X^* : d_B(x, Y^*) \leq m-(1/4-\gamma/2)n\}, \text{ and}\]
\[ Y_S' = \{y \in Y^* : d_B(y, X^*) \leq m-(1/4-\gamma/2)n\} .\]
Then
\begin{align*}
\delta_R(X_S', Y^*) &\geq \delta(G) - (n-|Y^*|) - \Delta_B(X_S', Y^*)\\
&\geq (3/4+\gamma)n-(n-m)-(m-(1/4-\gamma/2)n)=\gamma n/2 ,
\end{align*}
so
\[ \theta n^2 \geq e_R(X', Y')\geq  e_R(X_L, Y_L) \geq e_R(X_S', Y^*) \geq |X_S'|\gamma n/2 ,\]
which implies $|X_S'| \leq \frac{2\theta}{\gamma}n$. Similarly $|Y_S'| \leq \frac{2\theta}{\gamma} n$.

Note that from \eqref{XLYL} and $|Y_L|-|Y^*| \leq t+\gamma n/8$, we have 
\begin{equation}\label{X*Y*}
\delta_B(X^*, Y^*) \geq \delta_B(X_L, Y_L) - (t+\gamma n/8) \geq \gamma n/8,
\end{equation}
and similarly 
\begin{equation}\label{Y*X*}
\delta_B(Y^*, X^*) \geq \gamma n/8.
\end{equation}

Now enumerate $X^*$ and $Y^*$ as $x_1,\dots, x_m$ and $y_1,\dots, y_m$, respectively, in increasing order of degree in $[X^*, Y^*]_B$. Let $i$ and $j$ be the smallest indices such that $d_B(x_i, Y^*) \leq i+1$ and $d_B(y_j, X^*) \leq j+1$. Either $X_S'=\emptyset$ or by \eqref{X*Y*}
\[
\delta_B(X_S', Y^*) \geq \gamma n/8 \geq \frac{6\theta}{\gamma} n \geq 3|X_S'|\geq |X_S'|+2;
\]
either way we have $i > |X_S'|$, so $x_i \in X^* \setminus X_S'$.  Similarly, by \eqref{Y*X*}, we have $y_j \in Y^* \setminus Y_S'$. Then 
\[
d_B(x_i, Y^*) + d_B(y_j, X^*) > 2(m-(1/4-\gamma/2)n)=m + (m-(1/2-\gamma)n) \geq m + 2, 
\]
so by Theorem \ref{berge_lemma}, $[X^*, Y^*]_B$ is Hamiltonian bi-connected.
\end{proof}

\begin{lemma}\label{BigPartProp}
Let $0 \leq 8\sqrt{\theta} < \gamma \leq \frac{1}{4}$ be real numbers and let $n$ be an integer such that $n\geq 3/\gamma$. Let $G$ be a 2-colored balanced $X,Y$-bipartite graph on $2n$ vertices with $\delta(G) \geq (\frac{3}{4} + \gamma)n$. If there exists $X' \subseteq X$, $Y' \subseteq Y$ such that $|X'| \geq 3n/4$ and $(\frac{1}{2} + \theta) n \geq |Y'| \geq \frac{1}{2} n$, $e_R(X', Y') \leq \theta n^2$, and $\delta_B(Y', X') \geq \gamma n$, then $G$ contains a blue cycle on $2\ceiling{n/2}$ vertices. 
\end{lemma}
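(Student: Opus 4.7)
The plan is to apply Lemma \ref{longPathProp} directly to the given sets $X'$ and $Y'$, and then to use the additional hypothesis $\delta_B(Y', X') \geq \gamma n$ to show that the ``bad'' set $Y_S$ produced by that lemma is actually empty; once this is established, the desired cycle falls out of Hamilton bi-connectedness with essentially no further work.

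First I would verify that the hypotheses of Lemma \ref{longPathProp} hold: we have $|X'| \geq 3n/4 \geq (1/2 - \gamma/8)n$, $|Y'| \geq n/2 \geq (1/2 - \gamma/8)n$, and $e_R(X', Y') \leq \theta n^2$. Applying the lemma produces $X_S \subseteq X'$ and $Y_S \subseteq Y'$ with $|X_S|, |Y_S| \leq 4\theta n$, such that for any balanced $X^* \subseteq X' \setminus X_S$ and $Y^* \subseteq Y' \setminus Y_S$ of common size at least $(1/2 - \gamma/4)n$, the blue graph $[X^*, Y^*]_B$ is Hamiltonian bi-connected.

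Next comes the key observation: $Y_S = \emptyset$. By definition $y \in Y_S$ iff $d_B(y, X') \leq |Y'| - n/2 + \gamma n/2$, and using $|Y'| \leq (1/2 + \theta)n$ the threshold is at most $\theta n + \gamma n/2$. But every $y \in Y'$ satisfies $d_B(y, X') \geq \gamma n$ by hypothesis, and the assumption $8\sqrt{\theta} < \gamma \leq 1/4$ gives $\theta < \gamma^2/64 \leq \gamma/2$, hence $\gamma n > \theta n + \gamma n/2$. Thus no vertex of $Y'$ lands in $Y_S$.

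Since $|Y'|$ is an integer at least $n/2$, we have $|Y'| \geq \lceil n/2 \rceil$, and since $|X' \setminus X_S| \geq 3n/4 - 4\theta n \geq \lceil n/2 \rceil$, I would pick $Y^* \subseteq Y' = Y' \setminus Y_S$ and $X^* \subseteq X' \setminus X_S$ each of size exactly $\lceil n/2 \rceil$. Both sizes exceed $(1/2 - \gamma/4)n$, so by Lemma \ref{longPathProp} the graph $[X^*, Y^*]_B$ is Hamiltonian bi-connected. Choosing any blue edge $xy$ with $x \in X^*$ and $y \in Y^*$ (such an edge exists since the minimum blue degree inside $[X^*,Y^*]_B$ is strictly positive), the Hamiltonian blue path from $x$ to $y$ together with the edge $xy$ produces a blue cycle of order $2\lceil n/2 \rceil$ in $G$.

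I do not expect a serious obstacle once the computation above is spotted; the main insight is simply that the minimum blue degree hypothesis $\delta_B(Y', X') \geq \gamma n$, combined with the upper bound $|Y'| \leq (1/2 + \theta)n$, is tuned precisely so as to kill off the bad set $Y_S$ coming from Lemma \ref{longPathProp}, after which everything reduces to a clean application of Hamilton bi-connectedness.
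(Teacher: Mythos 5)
Your proof does not work, and the failure traces to a misreading of Lemma \ref{longPathProp}. In that lemma the bad set on the $Y$-side is
\[
Y_S = \{y \in Y' : d_B(y, X') \leq |X'| - n/2 + \gamma n/2\},
\]
with threshold governed by $|X'|$, not $|Y'|$. You substituted $|Y'|$ for $|X'|$ when you wrote ``the threshold is at most $\theta n + \gamma n/2$.'' In the present lemma $|X'| \geq 3n/4$, so the actual threshold is at least $n/4 + \gamma n/2$, which dwarfs the hypothesis $\delta_B(Y',X') \geq \gamma n$. Consequently $Y_S$ need not be empty; Lemma \ref{longPathProp} only gives $|Y_S| \leq 4\theta n$.

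This is not a repairable slip: you cannot simply throw $Y_S$ away, because $|Y'| \leq (1/2 + \theta)n$ means that after deleting up to $4\theta n$ vertices you may be left with fewer than $\lceil n/2 \rceil$ vertices on the $Y$-side, so no cycle of the required order can be built from $[X^*,Y^*]_B$ alone. This is exactly why the lemma exists as a separate statement. The paper's proof instead defines its own (differently-thresholded) bad sets $X_S$ and $Y_S$, shows each is small, and then \emph{absorbs} the vertices of $Y_S$ into the cycle rather than discarding them: it greedily builds a short blue path $P$ through all of $Y_S$ using intermediate vertices from $X_L$ and $Y_L$ (the codegree estimates \eqref{XL} and \eqref{YL} make this possible), and only then applies Lemma \ref{longPathProp} to the remaining vertices to produce a Hamilton bi-connected piece that is stitched to the endpoints of $P$. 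That absorption step is the missing idea in your argument.
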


\begin{proof}
Define $X_S = \{x \in X' : d_B(x, Y') \leq (1/4+3\gamma/4)n \}$. Note that
\begin{align*}
\theta n^2\geq e_R(X', Y')\geq |X_S|\delta_R(X_S, Y')&\geq |X_S|((3/4+\gamma)n-(1/4+3\gamma /4)n-(n-|Y'|))\\
&\geq |X_S|\gamma n/4
\end{align*}
and thus $|X_S| \leq  \frac{4\theta}{\gamma} n \leq \sqrt{\theta} n$.
Set $X_L = X' \setminus X_S$. Define $Y_S = \{y \in Y' : d_B(y, X_L) \leq |X_L| - n/2 + 3\gamma n/4 \}$. Note that
\begin{align*}
\theta n^2\geq e_R(Y', X_L)&\geq |Y_S| \delta_R(Y_S, X_L)\\
&\geq |Y_S|((3/4+\gamma)n-(n-|X_L|)-(|X_L|-n/2+3\gamma/4) \geq  |Y_S| n/4
\end{align*}
and thus $t:=|Y_S|\leq 4\theta n$.
Let $Y_L = Y' \setminus Y_S$ and enumerate the vertices of $Y_S$ as $v_1, \dots, v_t$. Note that for all $v \in Y_S$,
\begin{equation}\label{XL}
|N_B(v) \cap X_L| \geq \gamma n - |X_S|\geq \gamma n- \sqrt{\theta} n \geq 8\theta n\geq 2t
\end{equation}
and for all $x, x' \in X_L$,
\begin{align}
|N_B(x) \cap N_B(x') \cap Y_L| &\geq 2((1/4 + \gamma/2)n-|Y_S|) - |Y_L|\notag\\
&\geq (1/2+\gamma-8\theta)n-(1/2+\theta)n >4\theta\geq t.\label{YL}
\end{align}

By \eqref{XL}, for all $i\in [t]$ we can greedily find $x_i, x_i'$ such that $x_iv_i$ and $v_ix_i'$ are blue edges.  Now by \eqref{YL} we may then greedily find $v_i'\in Y_L$ for $1 \leq i \leq t$ such that $x_i'v_i'$ and $v_i'x_{i+1}$ are blue edges. Then $x_1 v_1 x_1' v_1' \dots x_t v_t x_t' v_t'$ is a blue path $P$ covering $Y_S$. Applying Lemma \ref{longPathProp} with $X':=X_L \setminus (V(P) \setminus \{x_1\})$ and $Y':=Y' \setminus (V(P) \setminus \{v_t'\})$ (note that in the application $X_S=\emptyset=Y_S$), we get $X^* \subseteq X_L$ and $Y^* \subseteq Y'$ such that $|X^*| = |Y^*| \geq \frac{n}{2} - 2t$, $X^* \cap V(P) = \{x_1\}$, $Y^* \cap V(P) = \{v_t'\}$, and $[X^*, Y^*]_B$ is Hamiltonian connected.  We can thus get a blue path $P'$ with endpoints $x_1$ and $v_t'$ such that $V(P') \cap Y = Y^*$. Then $P$ joined with $P'$ is a blue cycle on at least $2\ceiling{n/2}$ vertices.
\end{proof}

Now we prove the main result of this section.

\begin{proposition}\label{extremalCase}
Let $\eta$ and $\gamma$ be real numbers with $0 \leq 16 \sqrt{\eta} < \gamma \leq \frac{1}{4}$ and let $n$ be an integer with $n \geq 3/\gamma$. If a balanced $X,Y$-bipartite graph $G$ on $2n$ vertices with $\delta(G)\geq (3/4+\gamma)n$ is $\eta$-extremal, then $G$ has a monochromatic path of order at least $2\ceiling{n/2}$ and a monochromatic cycle of length at least $2\floor{n/2}$.
\end{proposition}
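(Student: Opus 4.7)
The plan is to exploit the $\eta$-extremal partition $(X_1 := X', Y_1, Y_2)$ together with the high minimum degree to locate a mostly-monochromatic sub-bipartite of size roughly $n/2 \times n/2$, from which Lemmas~\ref{longPathProp} and~\ref{BigPartProp} will extract the long cycle and path. Let $X_2 := X \setminus X_1$. Since swapping $Y_1 \leftrightarrow Y_2$ together with the two colors preserves the $\eta$-extremal hypothesis, I may assume WLOG that $|Y_1| \geq n/2 \geq |Y_2|$, so $[X_1, Y_1]$ is mostly blue and $[X_1, Y_2]$ is mostly red; moreover $|Y_1|, |Y_2| \in [(1/2-\eta)n, (1/2+\eta)n]$ and $|X_1| \in [(1/2-\eta)n, n]$.

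The proof splits into two cases. In the \emph{concentrated case}, $|X_1|$ is so large that $|X_2|$ is small enough to guarantee $e_R(X, Y_1) \leq \theta n^2$ with $\theta$ satisfying $8\sqrt\theta < \gamma$; here the minimum degree bound ensures that all but a small exceptional subset of $Y_1$ has blue degree $\geq \gamma n$ into $X$, so I trim $Y_1$ to some $Y_1'$ with $|Y_1'| \in [n/2, (1/2+\theta)n]$ and apply Lemma~\ref{BigPartProp} directly with $X' := X$ and $Y' := Y_1'$, obtaining a blue cycle on $2\lceil n/2 \rceil$ vertices which contains both the desired cycle and path of the required lengths.

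In the \emph{balanced case}, $|X_1|$ is close to $n/2$ (so $|X_2|$ is also $\Theta(n)$), matching the balanced extremal pattern of Example~\ref{exLargeDeg}. Here I establish a ``mirror'' claim: either $[X_2, Y_2]$ is mostly blue, or $[X_2, Y_1]$ is mostly red. Indeed, if neither held, then many vertices $x \in X_2$ would simultaneously have large blue degree into $Y_1$ and large red degree into $Y_2$, and (after a refining step) either $X_1$ could be extended to a larger mostly-blue-to-$Y_1$ set, putting us back in the concentrated case, or a double-counting would contradict the $\eta$-extremality bound $e_R(X_1, Y_1) \leq \eta n^2$. Once the mirror is established, I apply Lemma~\ref{longPathProp} to whichever of $[X_1, Y_1]_B$, $[X_2, Y_2]_B$, $[X_1, Y_2]_R$, $[X_2, Y_1]_R$ has both sides of size $\geq \lceil n/2 \rceil$ (at least one exists since $|Y_1| \geq \lceil n/2 \rceil$ and one of $|X_1|, |X_2|$ must be $\geq \lceil n/2 \rceil$); this yields a Hamiltonian bi-connected subgraph of size $\lfloor n/2 \rfloor \times \lfloor n/2 \rfloor$ in the appropriate color, giving the cycle of length $2\lfloor n/2 \rfloor$. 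The path of length $2\lceil n/2 \rceil$, strictly longer than the cycle for odd $n$, is obtained by extending this cycle with a single extra vertex on each side, attached via monochromatic edges whose existence follows from the minimum degree.

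The main obstacle will be formalizing the mirror claim in the balanced case: I expect to need a careful double-counting combining $e_R(X_1, Y_1) \leq \eta n^2$, $e_B(X_1, Y_2) \leq \eta n^2$, and $d(x) \geq (3/4+\gamma)n$ for $x \in X_2$, to force the dichotomy between the two mirror possibilities. A secondary technical issue is that Lemma~\ref{longPathProp} only gives Hamiltonian bi-connectedness on subsets of size $\geq (1/2-\gamma/4)n$, slightly less than $\lfloor n/2 \rfloor$; to reach the full length $2\lfloor n/2 \rfloor$ I must absorb the small exceptional sets $X_S, Y_S$ (each of size $\leq 4\eta n$) into the cycle via a short-path construction analogous to the one in the proof of Lemma~\ref{BigPartProp}.
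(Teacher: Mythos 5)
Your ``mirror claim'' --- that either $[X_2, Y_2]$ is mostly blue or $[X_2, Y_1]$ is mostly red --- is false, and neither of the escape hatches you mention closes the gap. Consider: split $X_2$ into two roughly equal halves $X_2^a, X_2^b$, color all of $[X_2^a, Y]$ blue and all of $[X_2^b, Y]$ red, with $[X_1, Y_1]$ blue and $[X_1, Y_2]$ red as usual. Then $e_B(X_2, Y_1)$ and $e_R(X_2, Y_2)$ are both $\Theta(n^2)$, so both arms of the mirror fail. No vertex of $X_2^a$ can be absorbed into $X_1$ --- each has $\sim n/2$ blue edges into $Y_2$, which would destroy the bound on $e_B(X_1, Y_2)$ --- so the ``concentrated case'' escape does not activate, and there is no contradiction with $e_R(X_1, Y_1) \le \eta n^2$ (which is simply $0$ here). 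The proposition is still true for this coloring ($[X_1 \cup X_2^a, Y_1]$ is entirely blue), so it is the argument that fails, not the statement.

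The paper sidesteps this by decoupling the two extremality conditions during re-balancing: it moves vertices from $X_2$ to $X_1$ based \emph{only} on low red degree to $Y_1'$ (the set $A_2^S$), accepting the blue edges to $Y_2'$ this introduces because only $O(\eta n)$ vertices are moved. After that it splits on $|X_1'| \geq 3n/4$ (apply Lemma~\ref{BigPartProp}) versus $n/2 \le |X_1'| < 3n/4$ (apply Lemma~\ref{longPathProp}) versus $|X_2'| > n/2$, and in the last regime it tests whether $e_B(X_2', Y_1')$ is large: if so, it extracts a short blue path via Erd\H{o}s--Gallai (Theorem~\ref{erdos_lemma}) and absorbs it into a blue cycle through $[X_1', Y_1']_B$; if not, both $[X_2', Y_1']$ and $[X_1', Y_2']$ are essentially red, and a Menger-type cut argument with a careful odd-$n$ parity analysis finishes. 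That final subcase is precisely where one can only guarantee a cycle of length $2\lfloor n/2 \rfloor$ (but still a path of order $2\lceil n/2 \rceil$), cf.\ Example~\ref{exCycle}; your sketch does not engage with the odd-$n$ difficulty at all, which is the reason the two conclusions differ. Two smaller points: applying Lemma~\ref{BigPartProp} with $X' := X$ forces you to control $e_R(X_2, Y_1')$, which is hopeless unless $|X_2| = O(\theta n)$ --- you should use $X' := X_1'$ and split at $3n/4$ as above; and the absorption of the $O(\eta n)$ exceptional vertices that you defer to a ``short-path construction'' is exactly what Lemma~\ref{BigPartProp} already does, so it should be invoked rather than re-derived.
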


\begin{proof}
Our goal throughout the proof will be to find a monochromatic cycle of length at least $2\ceiling{n/2}$ which will satisfy both conclusions. We will be able to do this in all but one case (which is necessary because of Example \ref{exCycle}).

Since $G$ is $\eta$-extremal, let $X_1, X_2$ and $Y_1, Y_2$ partition $X$ and $Y$, respectively, with $|X_1|, |Y_1|, |Y_2| \geq (1/2 - \eta) n$ and $e_R(X_1, Y_1), e_B(X_1, Y_2) \leq \eta n^2$. Let $s := |X_1| - \frac{n}{2}$ (note that $s$ could be negative).

Define the following:
\[ Y_1^S = \{v \in Y_1 : d_B(v, X_1) \leq s + \gamma n\}; \quad Y_2^S = \{v \in Y_2 : d_R(v, X_1) \leq s + \gamma n\}; \]
\[ Y_1' = (Y_1 \setminus Y_1^S) \cup Y_2^S; \quad Y_2' = (Y_2 \setminus Y_2^S) \cup Y_1^S = Y \setminus Y_1' .\]
Then
\begin{align*}
\eta n^2 \geq e_R(X_1, Y_1) \geq e_R(X_1, Y_1^S)&\geq |Y_1^S|\delta_R(Y_1^S, X_1)\\
&\geq |Y_1^S|((3/4+\gamma)n-(s+\gamma n)-(n/2-s)) \geq |Y_1^S|n/4 ,
\end{align*}
so $|Y_1^S| \leq 4\eta n$. Similarly $|Y_2^S| \leq 4\eta n$.  So we have
\[ |Y_1'| \geq |Y_1| - |Y_1^S| \geq \left(\frac{1}{2} - 5\eta \right)n ; \quad |Y_2'| \geq |Y_2| - |Y_2^S| \geq \left(\frac{1}{2} - 5\eta \right)n; \]
\[ e_R(X_1, Y_1') \leq e_R(X_1, Y_1) + e_R(X_1, Y_2^S) \leq 5\eta n^2 ; \quad e_B(X_1, Y_2') \leq 5\eta n^2.\]

Without loss of generality, suppose $|Y_1'| \geq |Y_2'|$ so we have
\[|Y_1'|\geq \ceiling{n/2}.\]
Define the following:
\[ X_1^S = \{x \in X_1 : d_B(x, Y_1') \leq \gamma n\}; \quad X_2^S = \{x \in X_2 : d_R(x, Y_1') \leq \gamma n\}; \]
\[ X_1' = (X_1 \setminus X_1^S) \cup A_2^S; \quad X_2' = (X_2 \setminus A_2^S) \cup X_1^S ,\]
where $A_2^S=\emptyset$ if $|X_1|-|X_1^S|\geq n/2$ and otherwise $A_2^S \subseteq X_2^S$ is largest possible with size at most $\frac{n}{2} - |X_1 \setminus X_1^S|$ (note that, as opposed to $|X_1^S|$, it is possible for $|X_2^S|$ to be large, so we only want to move as many vertices from $X_2^S$ as necessary).
Then
\begin{align*} 
5\eta n^2 \geq e_R(X_1, Y_1') \geq e_R(X_1^S, Y_1')\geq |X_1^S|\delta_R(X_1^S, Y_1')
\geq |X_1^S|(3n/4-|Y_2'|) \geq |X_1^S|n/4 ,
\end{align*}
so $|X_1^S| \leq 20 \eta n$, which implies that 
\begin{equation}\label{X1'lower}
|X'_1| \geq |X_1| - |X_1^S| \geq (\frac{1}{2} - 20\eta )n + s
\end{equation}
 and $|A_2^S| \leq \min\{|X_2^S|, \max\{0, |X_1^S|-s\}\}\leq 21 \eta n$. So we have
\[
\delta_B(Y_1', X'_1) \geq s+\gamma n-|X_1^S|\geq s + 3\gamma n/4; \quad \delta_B(X'_1, Y_1') > \gamma n
\]
and
\begin{equation}\label{X1'toY1'}
e_R(X'_1, Y_1') \leq e_R(X_1, Y_1') + e_R(A_2^S, Y_1') \leq 5\eta n^2 + |A_2^S|\gamma n \leq 9\eta n^2. 
\end{equation}
 Similar to \eqref{X1'toY1'}, we also have that 
\begin{equation}\label{Y2'toX1'}
e_B(Y_2', X_1') \leq 9 \eta n^2.
\end{equation}

If $|X'_1| \geq \frac{3}{4}n$ and thus we have $\delta_B(Y_1', X'_1) \geq s+3\gamma n/4 \geq n/4$.  By \eqref{X1'toY1'}, we may use Lemma \ref{BigPartProp} with $X':=X_1'$ $Y':=Y_1'$, and $\theta:=9\eta$ to get a cycle of length at least $2\ceiling{n/2}$.

If $\frac{3}{4}n > |X'_1| \geq \frac{n}{2}$, then $s \leq \frac{n}{4} + 20 \eta n$ and we apply Lemma \ref{longPathProp} with $X' := X'_1$, $Y' := Y_1'$, $\theta := 9 \eta$ (note that in the application $X_S=\emptyset=Y_S$) to get a cycle of length at least $2\ceiling{n/2}$.

Otherwise we have $n/2+s-20\eta n\leq |X_1'|<n/2$ and thus $|X'_2| > \frac{n}{2}$.  
So if $e_B(X_2', Y_1') \geq 12\eta n^2$, then we may use Theorem \ref{erdos_lemma} to get a path $P \subseteq [X_2', Y_1']_B$ such that $k := |V(P) \cap X_2'| + 1 = |V(P) \cap Y_1'| = \ceiling{24\eta n}$ with endpoints $y, y' \in Y_1'$. 
Extend $P$ to $P'$ using a  blue edge $y'x \in [Y_1', X_1']$. Now using Lemma \ref{longPathProp} on $[X_1', Y_1']_B$ with $X':=X_1'$, $Y':=Y_1'$, and $\theta := 9 \eta$ (note that in the application $X_S=\emptyset=Y_S$), we get $X^* \subseteq X_1'$ and $Y^* \subseteq Y_1'$ such that $[X^*,Y^*]_B$ is Hamiltonian bi-connected, $x \in X^*$, $Y^* \cap V(P') = \{y\}$, and $|X^*| = |Y^*| = |Y_1'| - k + 1$.  Take a Hamiltonian path in $[X^*, Y^*]_B$ with endpoints $x$ and $y$, and adjoin it to $P'$ to form a cycle of length at least $2 \lceil n/2 \rceil$.

So suppose we are in the case that $e_B(X_2', Y_1') < 12 \eta n^2$. Use Lemma \ref{longPathProp} with $X':=X_2'$, $Y':=Y_1'$, and $\theta = 12 \eta$ to get $X_2^* \subseteq X_2'$ and $Y_1^* \subseteq Y_1'$ such that $[X_2^*,Y_1^*]_R$ is Hamiltonian bi-connected and $|X_2^*| = |Y_1^*| \geq \frac{n}{2} - \frac{\gamma}{4}n$. By \eqref{Y2'toX1'}, we may use Lemma \ref{longPathProp} again with $X':=X_1'$, $Y':=Y_2'$, and $\theta = 9\eta$ to get $X_1^* \subseteq X_1'$ and $Y_2^* \subseteq Y_2'$ such that $[X_1^*, Y_2^*]_R$ is Hamiltonian bi-connected\footnote{Note that we know more than just that $[X_1^*,Y_2^*]_R$ and $[X_2^*,Y_1^*]_R$ are Hamiltonian bi-connected. From the degree conditions, we know that removing a small constant number of vertices will leave a nearly spanning subgraph which is Hamiltonian bi-connected.} and $|X_1^*| = |Y_2^*| \geq \frac{n}{2} - \frac{\gamma}{4}n$. ($X_i^*$ and $Y_i^*$ exist since $\frac{n}{2} - 20 \eta n - 4\theta n \geq \frac{n}{2} - \frac{\gamma}{4} n$.) 

If there were two disjoint red paths from $X_1^* \cup Y_2^*$ to $X_2^* \cup Y_1^*$, we could construct a nearly spanning red cycle\footnote{For the rest of the proof, we say that a path/cycle is nearly spanning if it has length $(2-o(1))n$. We do not carefully calculate the constants since we are only trying to construct a path/cycle of roughly half that size.}; so suppose there is at most one such red path.  
Let $W_R$ be the smallest set such that there are no red paths from $X_1^* \cup Y_2^*$ to $X_2^* \cup Y_1^*$ in $G-W_R$.  Note that by Menger's theorem, $W_R$ is either empty or consists of a single vertex which we denote $w_R$.  Without loss of generality, suppose $W_R\subseteq Y$.  Define a partition $\{\hat{X}_1, \hat{X}_2\}$ of $X$ and a partition $\{\hat{Y}_1, \hat{Y}_2\}$ of $Y\setminus W_R$ such that $X_i^*\subseteq \hat{X}_i$ and $Y_i^*\setminus W_R\subseteq \hat{Y}_i$ for all $i\in [2]$ and there are no red edges between $\hat{X}_1 \cup \hat{Y}_2$ and $\hat{X}_2 \cup \hat{Y}_1$.

If $|\hat{X}_i|\geq \frac{n}{2}$ and $|\hat{Y}_{i}| \geq \frac{n}{2}$ for some $i\in [2]$, then we can find a blue cycle of length at least $2 \lceil n/2 \rceil$. (Note that $[\hat{X}_i, \hat{Y}_{i}]$ is completely blue and has minimum degree $n/4 + \gamma n$, so by Theorem \ref{berge_lemma}, $[\hat{X}_i, \hat{Y}_{i}]_B$ is Hamiltonian bi-connected.)

If $|\hat{X}_1| = \frac{n}{2}$, then $|\hat{X}_2|=\frac{n}{2}$ and $n$ must be even and so at least one of $\hat{Y}_1$ and $\hat{Y_2}$ must have order at least $\ceiling{\frac{n-1}{2}}=\frac{n}{2}$ and we are done by the previous paragraph.  
So without loss of generality, suppose 
\begin{equation*}
|\hat{X}_1| \geq \frac{n+1}{2} ~\text{ and }~ |\hat{Y}_1| \leq \frac{n-1}{2}. 
\end{equation*}

If there is a blue matching of size 2 in $[\hat{X}_1, \hat{Y}_2]$, then we can find a nearly spanning blue cycle; so suppose not.  

If there are no blue edges in $[\hat{X}_1, \hat{Y}_2]$, then if $|\hat{Y}_2|\geq n/2$ we can find a red cycle of length at least $2\ceiling{n/2}$ in $[\hat{X}_1, \hat{Y}_2]_R$; so suppose $|\hat{Y}_2|<n/2$.  This is only possible if $n$ is odd, $|\hat{Y}_2|=\frac{n-1}{2}=|\hat{Y}_1|$ and $|W_R|=1$.  Now, by Theorem \ref{berge_lemma}, if $w_R$ has at least $n/8$ red edges to $\hat{X}_1$, then we can find a red cycle of length $2\ceiling{n/2}$ in $[\hat{X}_1, \hat{Y}_2 \cup \{w_R\}]_R$, or else $w_R$ has at least $n/8$ blue edges to $\hat{X}_1$ and we can find a red cycle of length $2\ceiling{n/2}$ in $[\hat{X}_1, \hat{Y}_1\cup \{w_R\}]_B$. 

So suppose finally that the size of a maximum blue matching in $[\hat{X}_1, \hat{Y}_1]$ is exactly 1.  First note that in this case there is a nearly spanning blue path.  So to complete the proof, we must find a monochromatic cycle of length at least $2\floor{n/2}$.  By K\"onig's theorem, there is a single vertex in $[\hat{X}_1, \hat{Y}_2]$ which is incident with all of the blue edges.  Suppose first that $v^*\in \hat{Y}_2$ is such a vertex.  Move $v^*$ to $\hat{Y}_1$ (that is, formally redefine $\hat{Y}_2:=\hat{Y}_2\setminus \{v^*\}$ and $\hat{Y}_1:=\hat{Y}_1\cup \{v^*\}$) if and only if $v^*$ has fewer than $n/8$ red neighbors in $\hat{X}_1$. If $|\hat{Y}_2|\geq n/2$, then we are done by finding a red cycle of length at least $2\ceiling{n/2}$ in $[X_1, Y_2]$; so suppose $|\hat{Y}_2|<n/2$.  This is only possible if $n$ is odd, $|\hat{Y}_2|=\frac{n-1}{2}=|\hat{Y}_1|$ and $|W_R|=1$.  Now if $w_R$ has at least $n/8$ red edges to $\hat{X}_1$, we can find a red cycle of length $2\ceiling{n/2}$ in $[\hat{X}_1, \hat{Y}_2\cup \{w_R\}]_R$, or else $w_R$ has at least $n/8$ blue edges to $\hat{X}_1$ and we can find a blue cycle of length $2\ceiling{n/2}$ in $[\hat{X}_1, \hat{Y}_1\cup \{w_R\}]_B$.  Suppose instead that $u^*\in \hat{X}_1$ is the vertex in $[\hat{X}_1, \hat{Y}_2]$, guaranteed by K\"onig's theorem, which is incident with all of the blue edges.  Move $u^*$ to $\hat{X}_2$ (that is, formally redefine $\hat{X}_1:=\hat{X}_1\setminus \{u^*\}$ and $\hat{X}_2:=\hat{X}_2\cup \{u^*\}$) if and only if $u^*$ has fewer than $n/8$ blue neighbors in $\hat{Y}_2$. If $|\hat{Y}_2|\geq n/2$, then either $|\hat{X}_1|\geq n/2$ and we are done by finding a red cycle in $[\hat{X}_1, \hat{Y}_2]_R$, or $|\hat{X}_2|\geq n/2$ and we are done by finding a blue cycle in $[\hat{X}_2, \hat{Y}_2]_B$. So suppose $|\hat{Y}_2|< n/2$.  This is only possible if $n$ is odd, $|\hat{Y}_2|=\frac{n-1}{2}=|\hat{Y}_1|$ and $|W_R|=1$. Since $n$ is odd and we had $|\hat{X}_1|>n/2$ before potentially moving $u^*$, we now have $|\hat{X}_1|\geq \frac{n-1}{2}$ and thus there is a red cycle of length at least $2\floor{n/2}$ in $[\hat{X}_1, \hat{Y}_2]_R$.

\end{proof}

\end{document}